\newtheorem{theorem}{Theorem}%
\newtheorem{proposition}[theorem]{Proposition}%
\newtheorem{remark}{Remark}%
\newtheorem{fact}{Fact}%
\newtheorem{lemma}[theorem]{Lemma}%
\newtheorem{corollary}[theorem]{Corollary}%
\newtheorem{definition}{Definition}
\numberwithin{equation}{section}
\begin{document}

\bibliographystyle{abbrvnat}
\setcitestyle{authoryear,semicolon,open={(},close={)}} %

\title[Underdetermined Fourier Extensions for Surface PDEs]{Underdetermined Fourier Extensions for Surface Partial Differential Equations}

\author{Daniel R. Venn and Steven J. Ruuth}
\address{Mathematics, Simon Fraser University, 8888 University Drive, Burnaby, V5A 1S6, BC, Canada}
\email{dvenn@sfu.ca}

\begin{abstract}We analyze and test using Fourier extensions that minimize a Hilbert space norm for the purpose of solving partial differential equations (PDEs) on surfaces. In particular, we prove that the approach is arbitrarily high-order and also show a general result relating boundedness, solvability, and convergence that can be used to find eigenvalues. The method works by extending a solution to a surface PDE into a box-shaped domain so that the differential operators of the extended function agree with the surface differential operators, as in the Closest Point Method. This differs from approaches that require a basis for the surface of interest, which may not be available. Numerical experiments are also provided, demonstrating super-algebraic convergence. Current high-order methods for surface PDEs are often limited to a small class of surfaces or use radial basis functions (RBFs). Our approach offers certain advantages related to conditioning, generality, and ease of implementation. The method is meshfree and works on arbitrary surfaces (closed or non-closed) defined by point clouds with minimal conditions.
\end{abstract}
\keywords{numerical analysis; meshfree methods; Fourier extensions; spectral methods;  partial differential equations; Hermite-Birkhoff interpolation, radial basis functions}

\global\long\def\b#1{\left(#1\right)}%

\global\long\def\p{\phi}%

\global\long\def\abs#1{\left|#1\right|}%

\global\long\def\oiin{\oiint}%

\global\long\def\d{\text{d}}%

\global\long\def\curl{\nabla\times}%

\global\long\def\div{\nabla\cdot}%

\global\long\def\pd#1#2{\frac{\partial#1}{\partial#2}}%

\global\long\def\eval{\biggr|}%

\global\long\def\sb#1{\left[#1\right]}%

\global\long\def\cb#1{\left\{  #1\right\}  }%

\global\long\def\lc{\varepsilon}%

\global\long\def\R{\mathbb{R}}%

\global\long\def\C{\mathbb{C}}%

\global\long\def\code#1{\mathtt{#1}}%

\global\long\def\N{\mathbb{N}}%

\global\long\def\K{\mathbb{K}}%

\global\long\def\O{\mathcal{O}}%

\global\long\def\lb#1{\left(#1\right.}%

\global\long\def\rb#1{\left.#1\right)}%

\global\long\def\ss{\subseteq}%

\global\long\def\norm#1{\left\Vert #1\right\Vert }%

\global\long\def\ip#1{\left\langle #1\right\rangle }%

\global\long\def\B{\mathcal{B}}%

\global\long\def\ra{\mathcal{R}}%

\global\long\def\D{\mathcal{D}}%

\global\long\def\vp{\varphi}%

\global\long\def\sff#1#2{\left\langle #1,#2\right\rangle _{\text{II}}}%

\global\long\def\W{\mathcal{W}}%

\global\long\def\F{\mathcal{F}}%

\global\long\def\C{\mathbb{C}}%

\global\long\def\cont{\supseteq}%

\global\long\def\cp{\text{cp}}%

\global\long\def\vc#1{\boldsymbol{#1}}%

\global\long\def\x{\vc x}%

\global\long\def\y{\vc y}%

\global\long\def\n{\hat{\vc n}}%

\global\long\def\argmin#1{\underset{#1}{\text{argmin}}}%

\global\long\def\L{\mathcal{L}}%

\global\long\def\H{\mathcal{H}}%

\maketitle

\section{Introduction}

Partial differential equations (PDEs) on surfaces appear in a variety of contexts, often in medical imaging or computer graphics. 
Various methods for solving such problems have been developed over the past couple of decades; however, the number of methods with proofs of convergence is fairly limited. 

Among existing approaches, surface finite element methods are likely the best understood, owing largely to the significant work of Dziuk and Elliott in the 2010s \citep[see][]{dziuk13}. A number of implementations of radial basis function (RBF) methods, often using least squares collocation, have also been studied. RBFs are well understood for interpolation and can be high order, and while the popular method of \cite{kansa90} is known to potentially fail, techniques based on oversampling have been successful on flat domains as well as surfaces when a point cloud regularity condition is satisfied \citep{chen20}. The Closest Point Method (CPM) \citep{ruuth08} has been extensively studied numerically, and consistency follows directly from consistency of the underlying interpolation and finite difference techniques, but stability is not yet fully understood, though the method works in practice in most cases. RBF methods using similar extension approaches as the CPM, either by performing a Closest Point extension directly \citep{petra18}, or by satisfying normal derivative conditions \citep{piret12}, have also been proposed.

We propose a new, flexible, and high-order class of meshfree methods for PDEs on surfaces using underdetermined Fourier extensions. These extensions are chosen to be norm-minimizing in a certain Hilbert space. We show that the norm-minimizing property is enough to prove statements regarding the methods' convergence for a variety of problems. Finally, we connect our method to certain implementations of symmetric Hermite-Birkhoff interpolation with RBFs and demonstrate how it could be used to investigate the solvability of PDEs.

Norm-minimizing Hermite-Birkhoff interpolants have been previously studied in the RBF literature through Hermite RBFs \citep[see, for example,][]{frank98, sun94}. More recently, Chandrasekaran, Gorman, and Mhaskar have studied solving the norm-minimization problem more directly \citep{chand18}, which has some numerical advantages. Namely, solving the optimization problem directly can improve conditioning and is often simpler to implement. In our work, we explore the direct solution of the norm-minimization problem for surface PDEs via function extension from surfaces to a box. Extension does not require a basis for functions on the surface; a standard Fourier basis can be used.

Along with numerical tests in Section \ref{sec:numexp}, we present new analytical results in Propositions \ref{prop:gamma}, \ref{prop:Let--and}, and \ref{prop:conv} showing the high-order nature of our extension approach. We also prove a very general result regarding the relationship between boundedness, PDE solvability, and convergence in Proposition \ref{prop:Let--be2}. In particular, we show that boundedness of numerical solutions to a PDE as point spacing goes to zero implies solvability of the non-discretized PDE and convergence of the numerical solution to a PDE solution, regardless of whether the problem has multiple solutions. This result is used to find eigenvalues in Subsection \ref{subsec-eigs}.

\section{Preliminaries}
\label{sec:prelim}

\subsection{Fourier Extensions}

Spectral methods for PDEs typically work by searching for a PDE solution of the form:
\[
\tilde{u}=\sum_{n=1}^{N_{b}}a_{n}\phi_{n},
\]
where $\cb{a_{n}}_{n=1}^{N_{b}}$ are coefficients, and $\cb{\phi_{n}}_{n=1}^{N_{b}}$
are functions. $\cb{\phi_{n}}_{n=1}^{N_{b}}$ are often chosen to
be a subset of basis functions for some function space. Fourier basis functions are a common choice: $\phi_{n}\b{\x}=e^{i\vc{\omega}_{n}\cdot\x}$ for a set of frequencies $\cb{\vc\omega_n}$. However, for all but the simplest domains, coming up with suitable functions $\cb{\phi_n}$ can be challenging or impossible without an already existing, high-order numerical method.

A possible resolution to this problem is to avoid finding eigenfunctions
on a complicated domain altogether. Instead, the possibly complicated
domain $S$ is placed inside a box-shaped domain $\Omega\ss\R^{m}$. We can then use the box's Fourier basis functions
to expand functions on $S$, albeit not uniquely. In approximation
theory terms, our functions $\cb{\phi_{n}}_{n=1}^{N_{b}}$ will not
form a basis for functions on $S$, but rather a frame \citep[see][]{adcoc19}. 

Consider a reasonably nice function (where we will be more precise
later) $f:S\to\R$. There exist many functions $\tilde{f}:\Omega\to\R$
such that
\[
f=\tilde{f}\eval_{S}.
\]
The idea of Fourier extension is to attempt to write a Fourier series for $\tilde f$:
\[
\tilde{f}=\sum_{n=1}^{\infty}a_{n}\phi_{n},
\]
where $\cb {a_n}$ are coefficients and $\phi_n \b {\vc x} = e^{i\vc \omega_n \cdot \vc x}$ are Fourier basis functions for the box $\Omega$.

This sort of problem, where we extend $f$ from $S$ to a larger domain
$\Omega\supset S$ using a Fourier series, is referred to by \cite{boyd03} as Fourier extension of
the third kind when $S$ and $\Omega$ have the
same dimensionality. One motivation for computing such an extension
is that we are able to compute derivatives of Fourier series on $\Omega$
with high accuracy.

Computing a Fourier extension without knowledge of the function's exact Fourier coefficients is
typically done by only approximately satisfying $f=\tilde{f}$ on
$S$. A sample of points in $S$ is chosen, then the coefficients
$a_{n}$ are computed via least squares.
Certain implementations have been shown to converge super-algebraically in 1D for smooth
functions when using Chebyshev nodes, or oversampled, uniformly spaced nodes \citep{adcoc14}.

Fourier extensions have been applied successfully in various contexts,
ranging from straightforward 1D function approximation, to surface
reconstruction \citep{bruno07}, to PDEs in flat domains
\citep{boyd05,stein17,bruno22,matth18}. Analytical results, however, remain
fairly sparse in higher dimensions, particularly for PDEs.

\subsection{Closest Point and Closest Point-like Extensions}

Once we start working on manifolds embedded in a higher dimensional
space, we have another motivation for performing extensions from a
manifold $S$ to a larger domain $\Omega$ of co-dimension zero. There
are many available methods for computing derivatives and solving PDEs
in flat, Euclidean spaces; conversely, solving PDEs on manifolds directly
can be quite challenging, especially if a connected mesh is not available.

A straightforward method for extending functions off of manifolds
is a Closest Point extension, which is the idea underlying the Closest
Point Method (CPM) for PDEs on surfaces \citep{ruuth08}.
\begin{definition}
The (Euclidean) Closest Point extension of a function $f:S\to\R$,
is the function $f\circ\cp_{S}$, where
\[
\cp_{S}\b{\x}=\argmin{\y\in S}\norm{\x-\y}_{2}.
\]
\end{definition}

If $S$ is a twice differentiable manifold, then $\cp_{S}$ is well-defined and differentiable on an open set in $\R^m$ containing $S$ \cite[see, for example,][Problem 6-5]{lee13}; $\cp_S$ in general is one order less smooth than the manifold.
The utility of such an extension is clear from the following facts
(see the work of \cite{ruuth08} for the original statements and \cite{marz12}
for detailed proofs).

\begin{fact}
\label{fact:If-,-then:}If $f\in C^{1}\b S$ where $S$ is a twice continuously differentiable
manifold, then 
\[
\nabla\b{f\circ\cp_{S}}\eval_{S}=\nabla_{S}f,
\]
where $\nabla_{S}$ is the gradient intrinsic to $S$.
\end{fact}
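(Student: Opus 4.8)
The plan is to reduce everything to a single linear-algebraic identity for the Jacobian of $\cp_S$ and then invoke the chain rule. The excerpt already grants that $\cp_S$ is well defined and $C^{1}$ on an open set $U\supseteq S$ (losing one derivative relative to $S$, which is why $C^{2}$ regularity of $S$ is assumed), and $f\in C^{1}(S)$, so the composition $f\circ\cp_S\colon U\to\R$ is genuinely $C^{1}$ and $\nabla\b{f\circ\cp_S}$ is defined pointwise on $S$; the statement therefore makes sense. Fix $p\in S$, let $T_pS$ be the tangent space, and let $P_p\colon\R^m\to\R^m$ be the orthogonal projection onto $T_pS$. The key claim I would establish is
\[
D\cp_S|_p=P_p .
\]

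This claim splits into two observations. First, $\cp_S$ is the identity on $S$, since for $\x\in S$ the minimizer of $\norm{\x-\y}_2$ over $\y\in S$ is $\y=\x$; hence for any $C^{1}$ curve $\gamma$ in $S$ with $\gamma(0)=p$ one has $\cp_S\b{\gamma(t)}=\gamma(t)$, and differentiating at $t=0$ gives $D\cp_S|_p\vc w=\vc w$ for every $\vc w\in T_pS$. Second, $\cp_S$ is constant along normal fibres close to $S$: if $\n$ is a unit normal at $p$, then for $|t|$ small the nearest point of $S$ to $p+t\n$ is $p$ itself, so $t\mapsto\cp_S(p+t\n)$ is constant and $D\cp_S|_p\n=\vc 0$. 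Because $T_pS$ and $(T_pS)^{\perp}$ together span $\R^m$, these two facts force $D\cp_S|_p=P_p$. The chain rule then gives, for every $\vc h\in\R^m$,
\[
\nabla\b{f\circ\cp_S}(p)\cdot\vc h = df_p\b{D\cp_S|_p\,\vc h} = df_p\b{P_p\vc h} = \nabla_S f(p)\cdot P_p\vc h = \b{P_p\nabla_S f(p)}\cdot\vc h = \nabla_S f(p)\cdot\vc h,
\]
where $df_p$ is the intrinsic differential of $f$, the middle equality is the defining property of $\nabla_S f(p)$ as the tangential representative of $df_p$, the next uses that $P_p$ is self-adjoint, and the last uses $\nabla_S f(p)\in T_pS$. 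Since $\vc h$ is arbitrary, $\nabla\b{f\circ\cp_S}|_S=\nabla_S f$.

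The main obstacle is making the second observation rigorous, namely that $\cp_S$ is locally constant along normal directions. This rests on the tubular-neighbourhood structure behind the cited regularity result: one must take $U$ small enough that every point has a \emph{unique} nearest point on $S$ and the normal exponential map is a diffeomorphism onto $U$ (available locally because a $C^{2}$ manifold has positive reach), after which $p$ is manifestly the nearest point of $S$ to $p+t\n$. Everything else — the curve argument, the chain rule, the self-adjointness of $P_p$ — is routine, and the smoothness bookkeeping already flagged in the text (one derivative lost in $\cp_S$) is precisely why $C^{1}(S)$ data on a $C^{2}$ surface is the natural hypothesis.
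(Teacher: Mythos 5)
Your proof is correct. The paper itself does not prove this Fact; it defers to \cite{ruuth08} for the original statement and to \cite{marz12} for detailed proofs, and your argument is essentially the standard one found there: establish that $D\cp_S|_p$ is the orthogonal projection onto $T_pS$ (identity on tangent directions because $\cp_S|_S=\mathrm{id}$, zero on normal directions because $\cp_S$ is constant along normal fibres in a tubular neighbourhood, which exists since a $C^2$ manifold has locally positive reach), then conclude by the chain rule and self-adjointness of the projection. Your handling of the one technical point --- uniqueness of the nearest point near $S$ so that $\cp_S(p+t\n)=p$ --- is exactly the tubular-neighbourhood argument underlying the cited regularity of $\cp_S$, so nothing is missing.
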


\begin{fact}
\label{fact:If-,-then:-1}If $f\in C^{2}\b S$ where $S$ is a thrice differentiable
manifold, then

\[
\Delta\b{f\circ\cp_{S}}\eval_{S}=\Delta_{S}f,
\]
where $\Delta_{S}$ is the Laplace-Beltrami operator on $S$.
\end{fact}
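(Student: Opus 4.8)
The plan is to work in a tubular (Fermi) coordinate system around $S$ and to exploit the defining property of the closest point extension, namely that $f\circ\cp_S$ is constant along directions normal to $S$. Since $S$ is thrice differentiable, the signed distance function $\sigma$ to $S$ is $C^3$ on an open neighborhood $U\supseteq S$; consequently $\cp_S=\mathrm{id}-\sigma\nabla\sigma$ is $C^2$ on $U$, and every $\x\in U$ has a unique representation $\x=\y+\eta\,\n(\y)$ with $\y=\cp_S(\x)\in S$ and $\eta=\sigma(\x)$. The first step is to record that, writing $u:=f\circ\cp_S$,
\[
u\b{\y+\eta\,\n(\y)}=f(\y),
\]
so $u$ does not depend on the normal coordinate: $\partial_\eta u\equiv 0$, and hence $\partial_{\eta\eta}u\equiv 0$, throughout $U$.

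Next I would introduce coordinates $(\xi^{1},\dots,\xi^{m-1})$ on $S$ together with the normal coordinate $\eta$. Since $\nabla\sigma$ is constant along each normal line and equals the unit normal there, the normal lines meet every level set $S_\eta=\cb{\sigma=\eta}$ orthogonally, so the Fermi metric is block diagonal, $ds^{2}=g_{ab}(\xi,\eta)\,d\xi^{a}\,d\xi^{b}+d\eta^{2}$. The ambient Laplacian then splits as
\[
\Delta=\frac{1}{\sqrt g}\,\partial_{a}\!\b{\sqrt g\,g^{ab}\partial_{b}}+\partial_{\eta\eta}+\b{\partial_\eta\log\sqrt g}\,\partial_\eta=\Delta_{S_\eta}+\partial_{\eta\eta}+\b{\partial_\eta\log\sqrt g}\,\partial_\eta,
\]
where the coefficient $\partial_\eta\log\sqrt g$ at $\eta=0$ is, up to sign, the mean curvature of $S$. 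Restricting to $S$ and using $\partial_\eta u=\partial_{\eta\eta}u=0$ kills the last two terms, leaving $\Delta u|_S=\Delta_{S_0}\b{u|_{\eta=0}}=\Delta_S f$, which is the claim. The gradient identity in Fact~\ref{fact:If-,-then:} comes out of the same observation: $\partial_\eta u\equiv 0$ forces $\nabla u\cdot\n=0$ on $S$, while the tangential part of the ambient gradient of any extension of $f$ equals $\nabla_S f$.

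The main obstacle here is not the differential-geometric identity, which is classical, but the regularity bookkeeping needed to make it rigorous: one must verify that $C^3$ smoothness of $S$ is exactly enough for $\sigma\in C^3(U)$, $\cp_S\in C^2(U)$ and $u=f\circ\cp_S\in C^2(U)$ (so that $\Delta u$ extends continuously up to $S$ and $\Delta u|_S$ is meaningful), and that the Fermi change of coordinates together with the block-diagonal splitting of the metric are valid with two classical derivatives to spare. An alternative, coordinate-free route --- essentially the one taken in \cite{marz12} --- is to differentiate $\cp_S=\mathrm{id}-\sigma\nabla\sigma$ directly: using $\abs{\nabla\sigma}\equiv1$ (whence $\nabla\sigma\cdot\nabla^{2}\sigma=0$) and $\sigma|_S=0$ one gets $D\cp_S|_S=I-\n\,\n^{\top}$, and then two applications of the chain rule to $u=f\circ\cp_S$ followed by collecting terms give the result; the bulk of the effort there is likewise in controlling the derivatives of $\sigma$ near $S$.
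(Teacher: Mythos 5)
The paper does not actually prove this fact: it is quoted from \cite{ruuth08} with detailed proofs attributed to \cite{marz12}, so there is no internal argument to compare against. Your Fermi-coordinate derivation is the classical route and, for a $C^3$ \emph{hypersurface}, it is essentially correct: $\sigma\in C^3$ near $S$ gives $\cp_S\in C^2$ and $u=f\circ\cp_S\in C^2$, the Gauss lemma ($\abs{\nabla\sigma}\equiv 1$) gives the block-diagonal metric $g_{ab}\,d\xi^a d\xi^b+d\eta^2$, and $\partial_\eta u=\partial_{\eta\eta}u\equiv 0$ kills the normal and mean-curvature terms in the split Laplacian. Your identity $\partial_{\eta\eta}u|_S=0$, i.e.\ $\n_S\cdot\b{D^2 u}\n_S|_S=0$, is exactly what, fed into the paper's Lemma \ref{lem:(Lemma-1-of} (Xu--Zhao) together with Fact \ref{fact:If-,-then:}, reproduces the claim; so your argument is consistent with, and slightly more self-contained than, the route the paper itself gestures at via Corollary \ref{cor:Let--and}.

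The genuine gap is one of generality: the Fact is stated for a thrice differentiable \emph{manifold} $S\subset\R^m$, not a hypersurface, and both cited sources treat arbitrary codimension (e.g.\ curves in $\R^3$, which matter for the Closest Point Method). Your proof is built on the signed distance function and a single normal coordinate $\eta$, neither of which exists when $\mathrm{codim}\,S>1$; moreover, in tubular coordinates adapted to a higher-codimension normal bundle the metric is block diagonal only \emph{at} $S$ (the normal connection contributes off-diagonal terms for $\eta\ne 0$), so the clean splitting $\Delta=\Delta_{S_\eta}+\partial_{\eta\eta}+\b{\partial_\eta\log\sqrt g}\partial_\eta$ does not carry over verbatim. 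One must either redo the computation with a full normal frame, checking that constancy of $u$ on each normal fiber also eliminates the mixed tangential--normal contributions at $\eta=0$, or fall back on the coordinate-free differentiation of $\cp_S$ as in \cite{marz12}, which you mention as an alternative and which does handle general codimension. Within the scope this paper actually uses (hypersurfaces, as in Definition \ref{def:clopolike-1} and Corollary \ref{cor:Let--and}) your argument suffices, but as a proof of the Fact as stated it is incomplete.
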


Combining these facts allows for various PDEs to be solved on surfaces
without a parametrization or a mesh.

As it turns out, Closest Point extensions are not the only extensions
where Facts \ref{fact:If-,-then:} and \ref{fact:If-,-then:-1} hold.
\cite{marz12} studied extensions of the form $f\circ P$, where
$P:\Omega\to S$ is idempotent, and found general conditions needed
for $P$ so that Facts \ref{fact:If-,-then:} and \ref{fact:If-,-then:-1}
still hold with $P$ instead of $\text{cp}_{S}$. 

We state a couple of facts needed for functions extended from $S$ to $\Omega$ in
general to be used for computing differential operators on a hypersurface $S$. First,
we need a definition.
\begin{definition}
\label{def:clopolike-1}Let $f\in C^{1}\b S$ where $S$ is a twice differentiable hypersurface. Then $\tilde{f}\in C^{1}\b{\Omega}$
is a first-order Closest Point-like extension of $f$ from $S$ when

\[
\n_{S}\cdot\nabla\tilde{f}\eval_{S}  =0\quad\text{and}\quad
\tilde{f}\eval_{S}  =f,
\]
where $\n_S$  is the normal vector to $S$.

\end{definition}

This definition is motivated by the next fact.
\begin{proposition}
\label{prop:Let--satisfy:}Let $f\in C^{1}\b S$ and $\tilde{f}\in C^{1}\b{\Omega}$
be a first-order Closest Point-like extension of $f$. Then
\[
\nabla\tilde{f}\eval_{S}=\nabla_{S}f.
\]
\end{proposition}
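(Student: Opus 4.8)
The plan is to argue pointwise on $S$, using the orthogonal splitting of the ambient gradient into its tangential and normal parts. Fix $\x\in S$. Because $S$ is a twice differentiable hypersurface, $\R^m = T_\x S \oplus \mathrm{span}\,\n_S(\x)$, so I can write
\[
\nabla\tilde f\eval_\x \;=\; \Pi_\x\,\nabla\tilde f\eval_\x \;+\; \b{\n_S\cdot\nabla\tilde f}\eval_\x\,\n_S(\x),
\]
where $\Pi_\x = I - \n_S(\x)\,\n_S(\x)^\top$ is the orthogonal projection onto the tangent space $T_\x S$. The first defining relation of a first-order Closest Point-like extension, $\n_S\cdot\nabla\tilde f\eval_S = 0$, annihilates the normal term, so $\nabla\tilde f\eval_\x = \Pi_\x\nabla\tilde f\eval_\x\in T_\x S$; that is, the restricted ambient gradient is automatically tangent to $S$.

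Next I would pin down this tangential vector by testing it against arbitrary tangent directions. Given $\vc v\in T_\x S$, pick a $C^1$ curve $\gamma\colon(-\delta,\delta)\to S$ with $\gamma(0)=\x$ and $\gamma'(0)=\vc v$. The chain rule in $\R^m$ gives $\tfrac{d}{dt}\,\tilde f(\gamma(t))\big|_{t=0} = \nabla\tilde f\eval_\x\cdot\vc v$. On the other hand, the second defining relation $\tilde f\eval_S = f$ forces $\tilde f(\gamma(t)) = f(\gamma(t))$ for all $t$, and by definition of the intrinsic gradient $\tfrac{d}{dt}\,f(\gamma(t))\big|_{t=0} = \nabla_S f\eval_\x\cdot\vc v$. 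Hence $\b{\nabla\tilde f\eval_\x - \nabla_S f\eval_\x}\cdot\vc v = 0$ for every $\vc v\in T_\x S$. Since $\nabla\tilde f\eval_\x\in T_\x S$ by the first step and $\nabla_S f\eval_\x\in T_\x S$ because it is the Riemannian gradient, their difference lies in $T_\x S$ yet is orthogonal to all of $T_\x S$, so it vanishes. As $\x\in S$ was arbitrary, $\nabla\tilde f\eval_S = \nabla_S f$.

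I do not expect a genuine obstacle here: the proof is one linear-algebra decomposition plus the chain rule. The only points needing care are the regularity preliminaries --- that $C^1$ smoothness of $\tilde f$ together with $S$ being twice differentiable makes $\Pi_\x$ depend continuously on $\x$, guarantees the test curves $\gamma$ exist with prescribed velocity, and makes $\nabla_S f$ a well-defined tangent field. An equivalent, even shorter packaging avoids curves entirely: for \emph{any} $C^1$ ambient extension one always has $\nabla_S f = \Pi\,\b{\nabla\tilde f\eval_S}$ --- the standard formula for the surface gradient --- and the Closest Point-like condition says precisely that $\Pi$ fixes $\nabla\tilde f\eval_S$, giving the claim immediately. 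I would present whichever of these two is most consistent with the conventions already established in the paper.
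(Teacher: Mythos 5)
Your argument is correct: killing the normal component via $\n_S\cdot\nabla\tilde f\eval_S=0$ and then identifying the remaining tangential vector with $\nabla_S f$ by differentiating along curves in $S$ (equivalently, invoking the standard identity $\nabla_S f=\b{I-\n_S\n_S^\top}\nabla\tilde f\eval_S$ valid for any $C^1$ ambient extension) is exactly the right reasoning. The paper itself states this proposition without proof, presenting it as the motivation for the definition of a first-order Closest Point-like extension and deferring the general framework to M\"arz and Macdonald; the justification it implicitly relies on is the same tangential-projection identity you give in your ``shorter packaging,'' so either of your two versions would serve as the omitted proof.
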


To work with the Laplace-Beltrami operator, we need an important
result adapted from \cite{xu03}.
\begin{lemma}\cite[Lemma 1]{xu03}
\label{lem:(Lemma-1-of} Let $f\in C^{2}\b S,\tilde{f}\in C^{2}\b{\Omega}$ where $S$ is a thrice differentiable hypersurface.
If
\[
\tilde{f}\eval_{S}=f,
\]
then
\[
\Delta_{S}f=\b{\Delta\tilde{f}-\kappa\n_{S}\cdot\nabla\tilde{f}-\n_{S}\cdot\b{D^{2}\tilde{f}}\n_{S}}\eval_S,
\]
where $\kappa=\nabla_{S}\cdot\n_{S}$ is the mean curvature (sum of
principal curvatures in our convention) of $S$.
\end{lemma}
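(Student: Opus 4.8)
The plan is to prove the identity in two steps. Write $E(\tilde f):=\b{\Delta\tilde f-\kappa\,\n_S\cdot\nabla\tilde f-\n_S\cdot\b{D^{2}\tilde f}\n_S}\eval_S$. First I would show that $E(\tilde f)$ depends only on $f=\tilde f\eval_S$, and not on the particular $C^{2}$ extension $\tilde f$; then I would compute $E(\tilde f)$ for one convenient extension and read off the value $\Delta_S f$.

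For the extension-independence, let $\tilde f_1,\tilde f_2$ (defined and $C^{2}$ near $S$) both restrict to $f$ on $S$, and set $w=\tilde f_1-\tilde f_2$, so $w\eval_S=0$; it suffices to show $E(w)=0$. Since $w$ is constant along every curve lying in $S$, at each $p\in S$ the vector $\nabla w(p)$ annihilates $T_pS$, so $\nabla w\eval_S=\beta\,\n_S$ with $\beta:=\n_S\cdot\nabla w$. Fix $p\in S$ and an orthonormal frame $\cb{e_1,\dots,e_{m-1},\n_S}$ with the $e_i$ tangent to $S$ at $p$; taking the trace of $D^{2}w(p)$ in this frame gives $\Delta w(p)-\n_S\cdot\b{D^{2}w(p)}\n_S=\sum_{i=1}^{m-1}e_i\cdot\b{D^{2}w(p)}e_i$. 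For each $i$ choose a curve $\gamma_i$ in $S$ with $\gamma_i(0)=p$, $\gamma_i'(0)=e_i$; differentiating $w\circ\gamma_i\equiv 0$ twice yields $e_i\cdot\b{D^{2}w(p)}e_i=-\nabla w(p)\cdot\gamma_i''(0)=-\beta(p)\,\n_S\cdot\gamma_i''(0)$, and differentiating the orthogonality relation $\n_S(\gamma_i(t))\cdot\gamma_i'(t)\equiv 0$ gives $\n_S\cdot\gamma_i''(0)=-\b{\partial_{e_i}\n_S}\cdot e_i$. Summing over $i$ and recognizing $\sum_i\b{\partial_{e_i}\n_S}\cdot e_i=\nabla_S\cdot\n_S=\kappa$, one obtains $\Delta w(p)-\n_S\cdot\b{D^{2}w(p)}\n_S=\kappa\,\beta(p)=\kappa\,\b{\n_S\cdot\nabla w}(p)$, i.e. $E(w)=0$.

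For the second step I would evaluate $E$ at the closest point extension $\tilde f=f\circ\cp_S$, which is $C^{2}$ near $S$ under the stated hypotheses ($S$ thrice differentiable, $f\in C^{2}\b S$), so that the difference with any admissible $\tilde f$ is covered by the first step. By Fact \ref{fact:If-,-then:-1}, $\Delta\b{f\circ\cp_S}\eval_S=\Delta_S f$; by Proposition \ref{prop:Let--satisfy:}, $\nabla\b{f\circ\cp_S}\eval_S=\nabla_S f$ is tangent to $S$, so $\n_S\cdot\nabla\b{f\circ\cp_S}\eval_S=0$; and since $f\circ\cp_S$ is constant along each straight normal ray $t\mapsto p+t\,\n_S(p)$ (for $|t|$ below the reach of $S$), and that ray has constant velocity $\n_S(p)$, its second derivative along the ray equals $\n_S\cdot\b{D^{2}\b{f\circ\cp_S}}\n_S$, which therefore vanishes on $S$. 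Substituting, $E\b{f\circ\cp_S}=\Delta_S f$; combined with the first step, $E(\tilde f)=\Delta_S f$ for every admissible $\tilde f$, which is the claim.

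The one place requiring genuine care is the bookkeeping in the first step: selecting the adapted orthonormal frame, keeping the sign conventions straight so that $\sum_i\n_S\cdot\gamma_i''(0)$ comes out to exactly $-\kappa$ with $\kappa=\nabla_S\cdot\n_S$ in our convention, and confirming that $C^{2}$ regularity of $w$ (equivalently of $\tilde f_1,\tilde f_2$) and of $f\circ\cp_S$ is enough to justify all the second-order differentiations. None of this is deep, but it is where a spurious sign or factor of two could slip in unnoticed.
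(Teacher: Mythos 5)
Your argument is correct, but note that the paper does not prove this lemma at all: it is imported directly as Lemma~1 of \cite{xu03}, whose own derivation is the direct computation that writes $\nabla_S f=\b{I-\n_S\n_S^{T}}\nabla\tilde f$, takes the surface divergence, and expands the product rule to produce the three terms. Your route is genuinely different and self-contained within the paper's toolkit: you first prove extension-independence of $E(\tilde f)$ by showing $E(w)=0$ for any $C^2$ function $w$ vanishing on $S$ --- the adapted-frame computation giving $\Delta w-\n_S\cdot\b{D^{2}w}\n_S=\b{\nabla_S\cdot\n_S}\,\n_S\cdot\nabla w$ on $S$ is correct, including the sign, under the paper's convention $\kappa=\nabla_S\cdot\n_S$ --- and then evaluate $E$ on the closest point extension, where Fact~\ref{fact:If-,-then:-1}, Proposition~\ref{prop:Let--satisfy:}, and constancy of $f\circ\cp_S$ along normal rays kill both correction terms. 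What this buys is a transparent explanation of \emph{why} the terms $\kappa\,\n_S\cdot\nabla\tilde f$ and $\n_S\cdot\b{D^{2}\tilde f}\n_S$ appear (they are exactly the extension-dependence of the raw Laplacian); what it costs is reliance on Fact~\ref{fact:If-,-then:-1}, which is itself a nontrivial result proved in \cite{marz12}, and on the regularity bookkeeping you flag: since $S$ is only thrice differentiable, $f\circ\cp_S$ is exactly $C^{2}$ near $S$, which is just enough for your second step, and the curves $\gamma_i$ and the field $\n_S$ are twice and once differentiable respectively, which is enough for the first. Both steps are sound as written.
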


Using Lemma \ref{lem:(Lemma-1-of}, we can then
compute $\Delta_{S}$ using a first-order Closest Point-like extension.
\begin{corollary}
\label{cor:Let--and}Let $f\in C^{2}\b S$ and let $\tilde{f}\in C^{2}\b{\Omega}$
be a first-order Closest Point-like extension of $f$. Then
\[
\nabla\tilde{f}\eval_{S} =\nabla_{S}f,\quad
\b{\Delta\tilde{f}-\n_{S}\cdot\b{D^{2}\tilde{f}}\n_{S}}\eval_{S} 
=\Delta_{S}f.
\]
\end{corollary}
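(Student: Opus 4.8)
The plan is to derive Corollary~\ref{cor:Let--and} by combining Proposition~\ref{prop:Let--satisfy:} with Lemma~\ref{lem:(Lemma-1-of}, using the defining properties of a first-order Closest Point-like extension. The first identity, $\nabla\tilde f\eval_S = \nabla_S f$, is immediate: by Definition~\ref{def:clopolike-1} the extension $\tilde f$ satisfies $\tilde f\eval_S = f$ and $\n_S\cdot\nabla\tilde f\eval_S = 0$, which are exactly the hypotheses of Proposition~\ref{prop:Let--satisfy:}, so that proposition applies verbatim.

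For the second identity, I would start from Lemma~\ref{lem:(Lemma-1-of}, which applies since $\tilde f\in C^2(\Omega)$, $f\in C^2(S)$, and $\tilde f\eval_S = f$: it gives
\[
\Delta_S f = \b{\Delta\tilde f - \kappa\,\n_S\cdot\nabla\tilde f - \n_S\cdot\b{D^2\tilde f}\n_S}\eval_S.
\]
The key observation is that the middle term vanishes: the Closest Point-like condition $\n_S\cdot\nabla\tilde f\eval_S = 0$ kills the $\kappa\,\n_S\cdot\nabla\tilde f$ contribution on $S$, regardless of the value of the mean curvature $\kappa$. Substituting this in leaves exactly
\[
\Delta_S f = \b{\Delta\tilde f - \n_S\cdot\b{D^2\tilde f}\n_S}\eval_S,
\]
which is the claimed formula after rearranging. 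One should note the smoothness bookkeeping: Lemma~\ref{lem:(Lemma-1-of} is stated for a thrice differentiable hypersurface, so the corollary inherits that hypothesis (as it is already phrased implicitly through the lemma it invokes).

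Honestly, there is no real obstacle here — the corollary is a direct assembly of the two preceding results, and the only thing to be careful about is making sure the regularity assumptions on $S$ (twice differentiable for the gradient statement via Proposition~\ref{prop:Let--satisfy:}, thrice differentiable for the Laplace--Beltrami statement via Lemma~\ref{lem:(Lemma-1-of}) are stated consistently, and that the normal field $\n_S$ used in the curvature term and in the extension condition is the same field. The proof is therefore expected to be two or three sentences: invoke Proposition~\ref{prop:Let--satisfy:} for the first equation, invoke Lemma~\ref{lem:(Lemma-1-of} and apply $\n_S\cdot\nabla\tilde f\eval_S = 0$ to drop the mean-curvature term for the second.
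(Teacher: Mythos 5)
Your proof is correct and matches the paper's (implicit) argument exactly: the paper presents this corollary as a direct consequence of Proposition~\ref{prop:Let--satisfy:} for the gradient identity and of Lemma~\ref{lem:(Lemma-1-of} for the Laplace--Beltrami identity, with the condition $\n_{S}\cdot\nabla\tilde{f}\eval_{S}=0$ eliminating the mean-curvature term. Your remark about the smoothness bookkeeping (thrice differentiable $S$ for the second identity) is a fair observation that the paper leaves implicit in the statement of the corollary.
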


Crucially, the only hypersurface information we need is $\n_{S}$, not any of its
derivatives or curvature information. Also note that Corollary \ref{cor:Let--and} only imposes one additional interpolation condition. Some previous methods, such as the higher-order version of Piret's orthogonal gradients method with RBFs \citep{piret12}, impose two additional conditions; $\n_s \cdot \nabla_S f$ and $\n_{S}\cdot\b{D^{2}\tilde{f}}\n_{S}$ are both set to zero. Using only one condition is more computationally efficient. We use this corollary to solve surface PDEs in Section \ref{sec:numexp}.

We now note that smooth, periodic, Closest Point-like extensions exist for suitable hypersurfaces and functions.

\begin{proposition}\label{prop-cpexist}
Let $S\subset \Omega$ be a $C^{p+1}$ hypersurface such that $\overline{S}\subset T$, where $T\subset\Omega$ is also a $C^{p+1}$ hypersurface.
For $p\ge1$,  $f\in C^{p}\b T$, there
exists a periodic function with $p$ periodic derivatives $\tilde{f}\in C^{p}\b{\Omega}$ such that $\tilde{f}$
is a first-order Closest Point-like extension of $f$ from $S$.
\end{proposition}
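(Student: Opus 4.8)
The plan is to build $\tilde f$ from the Closest Point extension of $f$ relative to the \emph{larger} hypersurface $T$, and then localise it near $\overline S$ with a smooth cutoff so that it extends periodically to $\Omega$. Note first that $S\subset\overline S\subset T$ together with $\dim S=\dim T=m-1$ makes $S$ a relatively open and relatively compact subset of $T$; this is the only place the auxiliary hypersurface $T$ is really used, and it is what allows us to sidestep the possibility that $S$ has a boundary (on which $\cp_S$ and an extension of $f$ could misbehave).

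First I would invoke the tubular neighbourhood theorem exactly as quoted after Definition~\ref{def:clopolike-1}: since $T$ is $C^{p+1}$ with $p\ge1$, the closest point projection $\cp_T$ is well-defined and of class $C^p$ on an open set $N\supset T$ (one derivative is lost relative to the hypersurface). Set $g:=f\circ\cp_T$ on $N$; as a composition of $C^p$ maps into the $C^{p+1}$ manifold $T$ it lies in $C^p(N)$, and $g\eval_T=f$ since $\cp_T\eval_T$ is the identity. Applying Fact~\ref{fact:If-,-then:} with $T$ in place of $S$ gives $\nabla g\eval_T=\nabla_T f$, which is everywhere tangent to $T$, so $\n_T\cdot\nabla g\eval_T=0$. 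Restricting to $S$, where $\n_S=\n_T$ because $S$ is open in $T$, we obtain $g\eval_S=f$ and $\n_S\cdot\nabla g\eval_S=0$. Thus $g$ is already a first-order Closest Point-like extension of $f$ from $S$ in the sense of Definition~\ref{def:clopolike-1}; its only defects are that it is defined on $N$ rather than on all of $\Omega$ and need not be periodic.

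Next I would remove those defects with a cutoff. Because $\Omega$ is bounded and $\overline S\subset T\subset\Omega$, the set $\overline S$ is a compact subset of the open set $\Omega$ contained in $N$, so there are open sets with $\overline S\subset U_1\subset\overline{U_1}\subset U_2\subset\overline{U_2}\subset N\cap\Omega$, where $\overline{U_2}$ is compact and disjoint from $\partial\Omega$. Pick $\chi\in C^\infty(\R^m)$ with $0\le\chi\le1$, $\chi\equiv1$ on $U_1$, and $\operatorname{supp}\chi\subset U_2$. Define $\tilde f:=\chi g$ on $N$ and $\tilde f:=0$ on $\Omega\setminus\operatorname{supp}\chi$; since $\operatorname{supp}\chi\subset U_2\subset N$ these agree on the overlap, so $\tilde f\in C^p(\Omega)$, and $\tilde f$ together with all its derivatives up to order $p$ vanishes in a neighbourhood of $\partial\Omega$. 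Hence extending $\tilde f$ by zero outside $\Omega$ and then $\Omega$-periodically yields a $C^p$ function on $\R^m$ with $p$ periodic derivatives. Finally, on $S\subset\overline S\subset U_1$ we have $\chi\equiv1$ and $\nabla\chi=0$, so $\tilde f\eval_S=g\eval_S=f$ and $\nabla\tilde f\eval_S=\chi\nabla g\eval_S+g\nabla\chi\eval_S=\nabla g\eval_S$, giving $\n_S\cdot\nabla\tilde f\eval_S=0$; thus $\tilde f$ is a first-order Closest Point-like extension of $f$ from $S$, as required.

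The genuinely load-bearing steps are the existence and $C^p$ regularity of $\cp_T$ on a tube (already cited in the excerpt) and, more importantly, the observation that $\overline S\subset T$ forces $S$ to be relatively open in $T$, so that $\n_S$ and $\n_T$ coincide along $S$ and the vanishing normal-derivative property transfers from the $T$-extension to $S$. Everything else — the composition smoothness of $f\circ\cp_T$, the bump function $\chi$, and the verification that the resulting function and its first $p$ derivatives vanish near $\partial\Omega$ so the periodic extension is $C^p$ — is routine.
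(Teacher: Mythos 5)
Your construction is essentially the paper's own: the paper also takes $\tilde f = f\circ\cp_T$ on an open neighbourhood of $T$ and then cuts it off with a partition of unity (Lemma 2.26 of \cite{lee13}) so that its support is compactly contained in $\Omega$, making the periodic extension $C^p$. Your write-up simply fills in the details the paper leaves implicit (the verification via Fact \ref{fact:If-,-then:} that $\n_S\cdot\nabla\b{f\circ\cp_T}$ vanishes on $S$, and the explicit bump function), and it is correct.
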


This is constructed as $\tilde f = \b{f\circ\cp_T}$ in an open set $U$ containing $T$, then extended from a closed subset of $U$ with an interior that contains $\overline S$ to $\Omega$ using a partition of unity as in Lemma 2.26 of the text by \cite{lee13}, such that the support of $\tilde f$ is compactly contained in $\Omega$. 
This result tells us that for the PDEs on surfaces we consider, there are periodic extended solutions with the same degree of smoothness as the solution on the surface. In particular, the constrained norm-minimization problems that we set up will have feasible solutions.

\subsection{Radial Basis Function Interpolation\label{subsec:Interpolation}}

We will make use of existing interpolation results to prove the convergence
of our own methods; we therefore review some basic results regarding RBF interpolation.

Radial basis functions (RBFs) are a class of functions used for interpolation
and numerical PDEs that depend on the location of the interpolation
points $\cb{\x_{k}}_{k=1}^{\tilde{N}}\subset S\subset\R^{n}$. This
is vital since, due to the Mairhuber-Curtis Theorem \citep{curti59,mairh56},
any set of $\tilde{N}$ functions that does not depend on the location
of interpolation points cannot uniquely interpolate every set of $\tilde{N}$
points. Specifically, polynomial interpolation when the number of
polynomials matches the number of points can fail to produce a solution.

Before stating some results, we first define the fill distance.

\begin{definition}
Let $\cb{\x_{k}}_{k=1}^{\tilde{N}}\subset S\subset\R^{m}$ be distinct
points. Define the fill distance of $\cb{\x_{k}}_{k=1}^{\tilde{N}}$
in $S$:
\[
h_{\text{max}}:=\sup_{\x\in S}\min_{k\in\cb{1,2,\ldots,\tilde{N}}}\norm{\x-\x_{k}}_{2}.
\]

In 1D, with $S=\sb{a,b}$ and $\cb{x_k}_1^{\tilde N}$ increasing, this is equivalent to
\[
h_{\text{max}}=\max\b{\cb{\frac{1}{2}\abs{x_{k}-x_{k-1}}}_{k=2}^{\tilde{N}}\cup\cb{x_{1}-a,b-x_{\tilde{N}}}}.
\]
\end{definition}

That is, the fill distance is the largest distance between a point
$\x\in S$ and its closest point in $\cb{\x_{k}}_{k=1}^{\tilde{N}}$.

As suggested by the name, RBFs are radially symmetric. We also restrict
ourselves to the discussion of positive definite RBFs, which we now define.
\begin{definition}
A positive definite radial basis function is a function $\phi:\R^{m}\to\R$
that possesses two properties:
\begin{enumerate}
    \item $\phi$ is radial;
    \[
    \norm{\x}_{2}=\norm{\y }_{2}\implies\phi\b{\x}=\phi\b{\y}.
    \]
    \item $\phi$ is (strictly) positive definite. That is, if we define the interpolation matrix $\vc{\Phi}$ on any set of unique points $\cb{\x_{k}}_{k=1}^{\tilde{N}}\subset \R^m$:
    
    \[
    \vc{\Phi}_{jk}=\phi\b{\x_{j}-\x_{k}},
    \]
    then $\vc\Phi$ is a symmetric positive definite matrix.
\end{enumerate}
\end{definition}

Note that this means linear combinations of the functions $\cb{\psi_{k}}_{k=1}^{\tilde{N}}$ defined by $\psi_{k}\b{\x}=\phi\b{\x-\x_{k}}$ can interpolate any function
on $\cb{\x_{k}}_{k=1}^{\tilde{N}}$ (since $\vc\Phi$ must be positive definite). Given a function $f$ and RBF $\phi$, we refer to the unique function $\tilde f \in \text{span}\cb{\psi_k}$ such that $\tilde{f}\b {\x_k}=f\b{\x_k}$ for each $k\in\cb{1,2,\ldots,\tilde N}$ as the RBF interpolant of $f$ on $\cb{\x_k}_{k=1}^{\tilde N}$ using $\phi$. Our goal now is to quickly prove a general statement regarding functions with scattered zeros. 

First, we need a definition \citep[see][Def. 3.6]{wendl04}
to restrict the types of domains we can consider.
\begin{definition}
A set $U\subset\R^{m}$ satisfies an interior cone condition
if there exists an angle $\theta\in\b{0,\frac{\pi}{2}}$ and a radius
$r>0$ such that for all $\x\in U$, there exists some unit vector
$\vc{\xi}\b{\x}$ so that
\[
\cb{\x+\lambda\vc y:\y\in\R^{m},\norm{\y}_{2}=1,\y\cdot\vc{\xi}\b{\x}\ge\cos\theta,\lambda\in\sb{0,r}}\ss U.
\]
\end{definition}

In other words, $U$ satisfies an interior cone condition if there is
some cone of a fixed size and angle that can be placed with its vertex
at each $\x\in U$ and remain entirely contained in $U$.
Informally, $U$ is only “finitely pointy”. Hypersurfaces do not satisfy an interior cone condition; the hypersurfaces we consider will require that the domain $U$ of a local chart satisfies an interior cone condition instead.

The next proposition is adapted from \cite{wendl04}.
\begin{proposition}\cite[Thms. 10.35 and 11.17]{wendl04}
Let $f\in H^{\frac{m}{2}+q+\frac{1}{2}}\b{U}$ with $m\ge3$
if $q=0$ and $m\in\N$ for $q>0$ where %
$U \subset \R^m$ is bounded and
satisfies an interior cone condition. Then there exists a radial basis
function $\phi_{m,q}$ such that if $\tilde{f}$ is the RBF interpolant of $f$ on $\cb{\x_k}_{k=1}^{\tilde N}$ using $\phi_{m,q}$ where $\cb{\x_k}_{k=1}^{\tilde N}$ has fill distance
$h_{\text{max}}$ on $U$, then there exist constants $C_{m,q,\abs{\alpha}},h_{0,m,q,\abs{\alpha}}>0$
such that as long as $h_{\text{max}}\le h_{0,m,q,\abs{\alpha}}$,

\[
\norm{\partial^\alpha f- \partial^\alpha\tilde{f}}_{L^{\infty}\b{U}}\le C_{m,q,\abs{\alpha}}h_{\text{max}}^{q+\frac{1}{2}-\abs{\alpha}}\norm f_{H^{\frac{m}{2}+q+\frac{1}{2}}\b{U}},
\]
where $\alpha$ is a multi-index with $\abs{\alpha}\in\cb{0,1,2,\ldots,q}$.
\end{proposition}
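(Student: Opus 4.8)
The plan is to follow the variational (``native space'') approach to RBF error estimates. First choose $\phi_{m,q}$ so that its native space $\mathcal{N}_{\phi_{m,q}}(\R^m)$ is norm-equivalent to the Sobolev space $H^{m/2+q+1/2}(\R^m)$ --- concretely, one can take a Wendland compactly supported RBF (or a Mat\'ern kernel) of the appropriate smoothness, and it is precisely the requirement that such a kernel exist that constrains $m$ and $q$ as in the hypotheses. Because the interpolant $\tilde f=\sum_k c_k\psi_k$ reproduces the data and $\psi_k=\phi_{m,q}(\cdot-\x_k)$ is the reproducing kernel at $\x_k$, the error $f-\tilde f$ is orthogonal in $\mathcal{N}_{\phi_{m,q}}$ to $\tilde f$, and the Pythagorean identity then gives the minimum-norm bound $\norm{f-\tilde f}_{\mathcal{N}_{\phi_{m,q}}}\le\norm f_{\mathcal{N}_{\phi_{m,q}}}$. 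Combining this with the native-space/Sobolev norm equivalence (and, if needed, a Sobolev extension of $f$ from $U$ to $\R^m$, available since $U$ is bounded with sufficiently regular boundary), one obtains
\[
\norm{f-\tilde f}_{H^{m/2+q+1/2}(U)}\le C\,\norm f_{H^{m/2+q+1/2}(U)}.
\]

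The second ingredient is a sampling inequality (``zeros lemma''): if $g\in H^{m/2+q+1/2}(U)$ vanishes on a point set of fill distance $h_{\text{max}}$ in $U$, then for every multi-index $\alpha$ with $\abs\alpha\le q$,
\[
\norm{\partial^\alpha g}_{L^\infty(U)}\le C\,h_{\text{max}}^{\,q+\frac12-\abs\alpha}\,\norm g_{H^{m/2+q+1/2}(U)},
\]
provided $h_{\text{max}}$ is below a threshold $h_{0,m,q,\abs\alpha}$. I would prove this by localization: the interior cone condition lets one cover $U$ by small neighbourhoods on each of which a cone of fixed shape sits, so that each neighbourhood contains enough sample points to support a stable local polynomial reproduction of degree $q$; on each piece, writing $g$ via a Taylor polynomial, the polynomial part is killed by the vanishing of $g$ at the data points, and the remainder is estimated by a Bramble--Hilbert / norming-set argument against $\norm g_{H^{m/2+q+1/2}}$, with the powers of $h_{\text{max}}$ produced by rescaling to a reference cone. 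Applying this with $g=f-\tilde f$ (which indeed vanishes at $\cb{\x_k}_{k=1}^{\tilde N}$) and inserting the bound from the first step yields the claimed estimate, with $C_{m,q,\abs\alpha}$ collecting the extension constant, the norm-equivalence constant, and the sampling-inequality constant.

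The main obstacle is the sampling inequality. The minimum-norm property and the native-space identification are routine once the kernel is chosen, but the zeros lemma is where the interior cone condition is genuinely used: it is needed to make the local polynomial-reproduction geometry uniform across $U$, and one must track the scaling carefully to obtain exactly the exponent $q+\frac12-\abs\alpha$, which in turn forces the smoothness index $m/2+q+\frac12$ through the embedding $H^{m/2+q+1/2}\hookrightarrow C^{q}$.
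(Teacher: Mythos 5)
The paper gives no proof of this proposition at all: it is imported verbatim (with a change of notation) from Wendland's book, as the citation \cite[Thms.\ 10.35 and 11.17]{wendl04} indicates, so there is no in-paper argument to compare against. Your outline is nevertheless a correct reconstruction of the standard proof, and the constants and exponents you track (the kernel existence constraint forcing $m\ge3$ when $q=0$, the exponent $q+\frac{1}{2}-\abs{\alpha}=\tau-\frac{m}{2}-\abs{\alpha}$, the embedding $H^{m/2+q+1/2}\hookrightarrow C^{q}$) are the right ones. One organizational remark: your route is really that of \cite{narco05} — prove a sampling inequality for functions with scattered zeros first, then apply it to $g=f-\tilde f$ together with the minimum-norm bound $\norm{f-\tilde f}_{\mathcal{N}_{\phi}}\le\norm{f}_{\mathcal{N}_{\phi}}$ — whereas Wendland's own proof of Theorem 11.17 bounds the pointwise error functional directly by a (generalized) power function and then estimates the power function via local polynomial reproduction, never passing through a Sobolev bound on $f-\tilde f$ over all of $U$. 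The two are equivalent in substance, but note that the paper's logical order is the opposite of yours: it \emph{derives} the scattered-zeros bound (Corollary \ref{cor:Let--with}) as a corollary of this proposition by observing that the interpolant of a vanishing function is zero, while you would derive this proposition \emph{from} the scattered-zeros bound. Either direction is legitimate; yours has the advantage that the sampling inequality is then available as a standalone tool (which is exactly how the paper goes on to use it in Proposition \ref{prop:gamma}). The only point to be careful about is the one you already flag: identifying $\mathcal{N}_{\phi_{m,q}}(U)$ with $H^{m/2+q+1/2}(U)$ with equivalent norms requires a bounded Sobolev extension operator for $U$, which an interior cone condition alone does not supply; the interior cone condition is what drives the sampling inequality, and the extension property is an additional (tacit) regularity assumption in both the paper's statement and your sketch.
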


Noting that the RBF interpolant of a function that vanishes on $\cb{\x_{k}}_{k=1}^{\tilde{N}}$
is simply the zero function, we then have a general result for functions vanishing on scattered points.
\begin{corollary}
\label{cor:Let--with}Let $f\in H^{\frac{m}{2}+q+\frac{1}{2}}\b{U}$
with $m\ge3$
if $q=0$ and $m\in\N$ for $q>0$ where %
$U$ is bounded and satisfies an interior cone condition. Assume $f$ vanishes
on $\cb{\x_{k}}_{k=1}^{\tilde{N}}$ with fill distance $h_{\text{max}}$
on $U$, then for all $p\in\cb{0,1,\ldots q}$ for $m\ge3$ and
$p\in\cb{1,2,\ldots q}$ for $m<3$, there exist constants $C_{m,p,\abs{\alpha}},h_{0,m,p,\abs{\alpha}}>0$
such that as long as $h_{\text{max}}\le h_{0,m,p,\abs{\alpha}}$,
\[
\norm{ \partial^\alpha f}_{L^{\infty}\b{U}}\le C_{m,p,\abs{\alpha}}h_{\text{max}}^{p+\frac{1}{2}-\abs{\alpha}}\norm f_{H^{\frac{m}{2}+p+\frac{1}{2}}\b{U}}.
\]
where $\alpha$ is a multi-index with $\abs{\alpha}\in\cb{0,1,2,\ldots,p}$.
\end{corollary}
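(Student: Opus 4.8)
The plan is to derive this directly from the preceding Proposition (the interpolation estimate of \cite{wendl04} quoted above), applied with its parameter $q$ replaced by $p$, together with the observation recorded just above the statement: the RBF interpolant of a function vanishing on all of the points $\cb{\x_{k}}_{k=1}^{\tilde{N}}$ must be the zero function, since the zero function already interpolates the (zero) data and, for a positive definite $\phi$, the interpolant in $\text{span}\cb{\psi_{k}}$ is unique.

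First I would verify that the hypotheses of the Proposition hold with parameter $p$ in place of $q$. We are given $f\in H^{\frac{m}{2}+q+\frac{1}{2}}\b{U}$ with $U\subset\R^{m}$ bounded and satisfying an interior cone condition. Since $p\le q$, on such a bounded domain the relevant Sobolev spaces are nested, $H^{\frac{m}{2}+q+\frac{1}{2}}\b{U}\subseteq H^{\frac{m}{2}+p+\frac{1}{2}}\b{U}$, so $f\in H^{\frac{m}{2}+p+\frac{1}{2}}\b{U}$ and the quantity $\norm{f}_{H^{\frac{m}{2}+p+\frac{1}{2}}\b{U}}$ appearing on the right-hand side of the claimed bound is finite. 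Moreover, the admissibility restriction on $m$ versus $p$ in the corollary --- that $p\ge1$ is demanded when $m<3$ while $p=0$ is allowed only for $m\ge3$ --- is precisely what the Proposition requires once its $q$ is set equal to $p$ (namely $q=0\Rightarrow m\ge3$).

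Next comes the substitution itself. Fix an admissible $p$, let $\phi_{m,p}$ be the radial basis function furnished by the Proposition for that parameter, and let $\tilde{f}$ be the RBF interpolant of $f$ on $\cb{\x_{k}}_{k=1}^{\tilde{N}}$ using $\phi_{m,p}$. Because $f$ vanishes at each $\x_{k}$, uniqueness of the interpolant forces $\tilde{f}\equiv0$, hence $\partial^{\alpha}\tilde{f}\equiv0$ for every multi-index $\alpha$, and therefore $\norm{\partial^{\alpha}f-\partial^{\alpha}\tilde{f}}_{L^{\infty}\b{U}}=\norm{\partial^{\alpha}f}_{L^{\infty}\b{U}}$. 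Feeding this into the estimate of the Proposition gives, for $h_{\text{max}}\le h_{0,m,p,\abs{\alpha}}$ and $\abs{\alpha}\in\cb{0,1,\ldots,p}$, exactly the asserted bound
\[
\norm{\partial^{\alpha}f}_{L^{\infty}\b{U}}\le C_{m,p,\abs{\alpha}}\, h_{\text{max}}^{\,p+\frac{1}{2}-\abs{\alpha}}\,\norm{f}_{H^{\frac{m}{2}+p+\frac{1}{2}}\b{U}}.
\]

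There is no serious obstacle here; the corollary is essentially a repackaging of the Proposition. The only points requiring a little care are (i) the Sobolev nesting, which ensures $f$ lies in the space carrying the smaller smoothness index $p$ so that the right-hand norm is controlled, and (ii) the index bookkeeping --- in particular, one invokes the Proposition separately for each admissible $p$, so the radial basis function $\phi_{m,p}$ and the constants $C_{m,p,\abs{\alpha}}$, $h_{0,m,p,\abs{\alpha}}$ genuinely depend on $p$; there is no need, and no claim, that a single RBF serves all values of $p$ at once.
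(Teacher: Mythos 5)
Your proposal is correct and follows exactly the paper's (very brief) argument: apply the preceding Wendland proposition with its parameter set to $p$, and use the fact that the RBF interpolant of data vanishing on $\cb{\x_{k}}_{k=1}^{\tilde{N}}$ is the zero function, so the interpolation error equals $\partial^{\alpha}f$ itself. The additional care you take with the Sobolev nesting and the $p$-dependence of the chosen RBF is sound bookkeeping that the paper leaves implicit.
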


Similar results regarding error bounds for functions with scattered zeros exist in the
literature and could also be used for efficient convergence proofs for a wide variety of methods, 
including our own. In particular, more general statements for convergence in Sobolev norms have been shown by \cite{narco05}.

\subsubsection{Application to Other Interpolation Methods}

Corollary \ref{cor:Let--with} has
implications for other interpolation methods, noting that it is
a general result; it does not have any specific relation to RBF interpolation other than the method of proof. This gives an important insight into constructing effective interpolation
methods. If we are able to bound various $H^k$ norms of our error $u - \tilde{u}$, where $\tilde{u}$ is an interpolant and $u$ is the original function, then Corollary \ref{cor:Let--with} guarantees uniform, high-order convergence of $\tilde{u}$ to $u$ as the fill distance goes to zero.

We also note that a version of Corollary \ref{cor:Let--with} holds on manifolds $S$ as well, if the manifold
is sufficiently smooth. While the manifold itself may not satisfy
an interior cone condition, the domain $U$ of a local parametrization
$\sigma:U\to S$ typically will. The corollary can then be applied
to $f\circ\sigma$.

\section{Proposed Methods \& Analysis}

In order to solve PDEs on scattered data (such as a point cloud on
a surface), we need to be able to interpolate functions over scattered
data on arbitrary domains. RBFs, as discussed in Subsection \ref{subsec:Interpolation},
provide one way of doing this. 

Among existing approaches are interpolation methods
based on using a standard RBF basis, such as the method of \cite{piret12}. Like their flat domain
counterparts such as Kansa's original method \citep[see][]{kansa90}, these methods are often quite successful in practice but face some theoretical issues due to the possible singularity of
the interpolation matrix once derivatives are involved. As with most RBF methods, the accuracy of these methods is often severely limited by conditioning, leading to errors much higher than machine error.

Other existing approaches are RBF-FD methods, which are particularly useful for large, time-dependent problems
due to the sparsity of the differentiation matrices produced. These methods can either work combined with the Closest
Point Method \citep{petra18} or by more directly approximating the surface operators using estimates of a local level set \citep{alvar21}. Being local methods, such methods typically have limited orders of convergence. Furthermore, stability (for time-stepping) and non-singularity (for elliptic problems) of such methods are currently unknown. 

Lastly, various implementations using standard RBF basis functions and least-squares collocation have also been used. Significant analysis of these sorts of approaches, along with numerical testing, has been done by \cite{chen20}. A quasi-uniform point cloud is required, and a possibly quite dense point cloud of collocation points is required relative to density of the point cloud of RBF centres for guaranteed convergence. Alternatively, one may approximate a weak form of the PDE to achieve symmetry of the discretized Laplace-Beltrami operator, as in the work by \cite{yan23}, which overcomes a weakness of standard interpolation approaches, but requires knowledge of the distribution from which collocation points are sampled, which may not be available.

We seek to explore other interpolation methods based on
underdetermined Fourier extensions, which will allow for super-algebraic
convergence with minimal conditions on the point clouds that can be used. However, as we will see in Subsection \ref{subsec:adjrange}, 
our methods can also be viewed as a sort of periodic Hermite-Birkhoff
RBF interpolation with some numerical advantages.

Existing Fourier extension methods typically use an overdetermined system.
Such methods for function approximation and bulk PDE problems in 2D have been recently proposed
\citep[see, for example,][]{stein17, matth18, bruno22}, though convergence analysis is typically limited to 1D. Analysis
of overdetermined methods also relies on relating the discrete least squares problem to an $L^2$-norm
minimization problem through quadrature. Such an approach would not be feasible on all but a few surfaces
where high-order quadrature schemes can be developed.

\subsection{Hermite-Birkhoff Fourier Extension Problem}

We first set up a constrained optimization problem. Let $S\subset\R^{m}$
be our domain of interest and let $\Omega\supset S$ be a box. Select
a point cloud $S_{\tilde{N}}:=\cb{\x_{k}}_{k=1}^{\tilde{N}}\subset S$
and linear differential operators $\cb{\F_{k}}_{k=1}^{\tilde{N}}$
of maximum order $p\in\N$, where $\F_k = \sum_{\abs \alpha \le p} c_{k,\alpha}\partial^\alpha$ and the coefficient functions $c_{k,\alpha}$ are bounded and defined in a neighbourhood of $\x_k$ and $\partial^\alpha$ are the usual partial derivatives in $\R^m$. We note here that surface differential operators can be written in this form using extensions to the box $\Omega$ (see Lemma \ref{lem:(Lemma-1-of} and Corollary \ref{cor:Let--and}). Let $\cb{\phi_{n}}_{n=1}^{\infty}$ be
the Fourier basis for the box consisting of complex exponential eigenfunctions
of the Lapacian on $\Omega$ so that $\phi_{n}\b{\x}=e^{i\vc{\omega}_{n}\cdot\x}$
for some frequency $\vc{\omega}_{n}$. Let $d=\cb{d_n}_{n=1}^\infty$ be a sequence where each $d_n>0$, and
let $\cb{f_{k}}_{k=1}^{\tilde{N}}\subset\C$ be interpolation values. We then
consider the problem: 
\begin{align}
\text{minimize, over \ensuremath{b\in\ell^{2}}: } & \norm b_{\ell^{2}}\label{eq:hbproblem}\\
\text{subject to: } & \sum_{n=1}^{\infty}d_{n}^{-\frac{1}{2}}b_{n}\b{\F_{k}\phi_{n}}\b{\x_{k}}=f_{k}\text{, for $k\in\cb{1,2,\ldots,\tilde N}$}.\nonumber 
\end{align}

As long as the Hermite-Birkhoff interpolation conditions at each distinct point are consistent, there
will be at least one feasible solution to this problem. In fact, there is a feasible sequence $b$ with a finite number of non-zero terms (see Remark \ref{rem:interp} in Subsection \ref{subsec:finite} for a simple upper bound on the minimum number of terms needed). We now show uniqueness.

\begin{proposition}\label{prop:unique}
If the feasible set for (\ref{eq:hbproblem}) is non-empty and
$\norm{\vc\omega}_2^{p}d^{-\frac{1}{2}}$\\$ = \cb{\norm{\vc\omega_n}_2^{p}d_n^{-\frac{1}{2}}}_{n=1}^\infty\in\ell^{2}$,
then the solution
to (\ref{eq:hbproblem}) is unique. \end{proposition}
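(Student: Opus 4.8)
The plan is to show that $(\ref{eq:hbproblem})$ is a well-posed minimum-norm problem in $\ell^2$: the feasible set is a non-empty, closed, affine subspace, and any such set has a unique element of minimal norm (the projection of the origin onto it). The only nontrivial point is closedness of the feasible set, which hinges on the constraint functionals being well-defined and continuous on $\ell^2$; this is exactly where the hypothesis $\norm{\vc\omega}_2^p d^{-1/2}\in\ell^2$ enters.

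First I would fix $k$ and analyze the linear functional $L_k: \ell^2 \to \C$ given by $L_k(b) = \sum_{n=1}^\infty d_n^{-1/2} b_n (\F_k \phi_n)(\x_k)$. Since $\F_k = \sum_{|\alpha|\le p} c_{k,\alpha}\partial^\alpha$ and $\phi_n(\x) = e^{i\vc\omega_n\cdot\x}$, we have $\partial^\alpha \phi_n = (i\vc\omega_n)^\alpha \phi_n$, so $|(\F_k\phi_n)(\x_k)| \le \sum_{|\alpha|\le p} |c_{k,\alpha}(\x_k)|\,|\vc\omega_n^\alpha| \le C_k (1 + \norm{\vc\omega_n}_2)^p$ for a constant $C_k$ depending on the (bounded) coefficients of $\F_k$; here $|\vc\omega_n^\alpha|\le \norm{\vc\omega_n}_2^{|\alpha|}$ and $|\phi_n(\x_k)|=1$. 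Hence the sequence $c^{(k)} := \{d_n^{-1/2}(\F_k\phi_n)(\x_k)\}_{n=1}^\infty$ satisfies $|c_n^{(k)}| \le C_k d_n^{-1/2}(1+\norm{\vc\omega_n}_2)^p$. Because $\{d_n^{-1/2}\norm{\vc\omega_n}_2^p\}\in\ell^2$ by hypothesis and $\{d_n^{-1/2}\}\in\ell^2$ as well (it is dominated by the former once $\norm{\vc\omega_n}_2\ge 1$, with only finitely many frequencies of smaller norm on a box), the sequence $c^{(k)}$ lies in $\ell^2$. Therefore $L_k(b) = \langle b, \overline{c^{(k)}}\rangle_{\ell^2}$ (up to conjugation conventions) is a bounded linear functional on $\ell^2$ by Cauchy--Schwarz.

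Next I would conclude that the feasible set $A = \{b\in\ell^2 : L_k(b) = f_k,\ k=1,\ldots,\tilde N\}$ is an intersection of $\tilde N$ closed affine hyperplanes, hence closed, convex, and (by assumption) non-empty; in fact it is a translate of the closed subspace $\bigcap_k \ker L_k$. Then I invoke the standard Hilbert-space result: a non-empty closed convex subset of a Hilbert space contains a unique point of minimal norm (equivalently, the metric projection of $0$ onto $A$ exists and is unique, by the parallelogram law / strict convexity of $\ell^2$). This point is the unique solution of $(\ref{eq:hbproblem})$.

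The main obstacle — really the only substantive step — is verifying that $c^{(k)}\in\ell^2$, i.e. that the constraint sums converge and define continuous functionals; everything else is boilerplate Hilbert-space projection theory. The subtlety is handling the low derivative orders $|\alpha| < p$: a priori $\{d_n^{-1/2}\norm{\vc\omega_n}_2^{|\alpha|}\}$ need not be controlled by $\{d_n^{-1/2}\norm{\vc\omega_n}_2^p\}$ term-by-term for frequencies with $\norm{\vc\omega_n}_2 < 1$, but since $\Omega$ is a box there are only finitely many such frequencies, so those finitely many terms are harmless and the tail is dominated by the $\ell^2$ hypothesis. I would also remark that this is precisely the condition ensuring the Fourier series $\sum d_n^{-1/2} b_n \phi_n$ of the minimizer can be differentiated $p$ times (so the extension is a genuine $C^p$ — indeed the relevant regularity — function on $\Omega$), tying the uniqueness hypothesis to the smoothness needed for Corollary \ref{cor:Let--and} to apply later.
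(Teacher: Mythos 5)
Your proposal is correct and follows essentially the same route as the paper: both verify via Cauchy--Schwarz that the hypothesis $\norm{\vc\omega}_2^{p}d^{-\frac{1}{2}}\in\ell^{2}$ makes the constraint map bounded on $\ell^2$, so the feasible set is a non-empty closed convex (affine) set, and then both invoke the standard Hilbert-space theorem giving a unique element of minimal norm. Your extra care with the low-order terms $\abs{\alpha}<p$ and the finitely many small frequencies is a minor refinement of a point the paper passes over implicitly, not a different argument.
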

\begin{proof}
Define the linear operator $\L_{\text{HB}}:\ell^{2}\to\C^{\tilde{N}}$
by: 
\[
\b{\L_{\text{HB}}b}_{k}:=\sum_{n=1}^{\infty}d_{n}^{-\frac{1}{2}}b_{n}\b{\F_{k}\phi_{n}}\b{\x_{k}}.
\]

If $\norm{\vc\omega}_2^{p}d^{-\frac{1}{2}}\in\ell^{2}$, then noting that each $\F_k$ is of order at most $p$ and has bounded coefficient functions, it is clear that $\L_{\text{HB}}$ is bounded, since for $q\in\cb{0,1,\ldots,p}$,
\[
\abs{\sum_{n=1}^{\infty}d_{n}^{-\frac{1}{2}}b_{n}\norm{\vc\omega_n}_2^{q}\phi_{n}\b{\x_{k}}}\le\norm{\norm{\vc\omega}_2^{q}d^{-\frac{1}{2}}}_{\ell^{2}}\norm b_{\ell^{2}}\text{, by Cauchy-Schwarz}.
\]
Since $\L_{\text{HB}}$ is bounded, $\mathcal{N}\b{\L_{\text{HB}}}$
is closed and the constraint set is closed and convex. By a standard
result in functional analysis \citep[see, for example,][3.3-1]{kreys91}, if the constraint set is non-empty, 
there is a unique element with a minimum norm; this is the unique solution. 
\end{proof}

\subsection{Native Hilbert Spaces}

Let $\tilde{b}$ be the solution to (\ref{eq:hbproblem}), then our
chosen Hermite-Birkhoff interpolant is
\begin{equation}
\tilde{u}=\sum_{n=1}^{\infty}d_{n}^{-\frac{1}{2}}\tilde{b}_{n}\phi_{n} \label{eq:hbint}.
\end{equation}

Our chosen solution $\tilde{u}$ turns out to be an element of a particular
Hilbert space of functions constructed from an isometry to $\ell^{2}$.
\begin{definition}  Let $\cb{\phi_{n}}_{n=1}^{\infty}$
be the Fourier basis for the box $\Omega$ with periodic boundary
conditions ($\phi_n\b\x = e^{i\vc{\omega}_n \cdot \x}$), then we define the Hilbert space $\H\b d$ by
\[
\H\b d :=\cb{\sum_{n=1}^{\infty}d_{n}^{-\frac{1}{2}}b_{n}\phi_{n}:b\in\ell^{2}}.
\]
The associated inner product for $\H\b d$ is
\[
\b{\sum_{n=1}^{\infty}d_{n}^{-\frac{1}{2}}a_{n}\phi_{n},\sum_{n=1}^{\infty}d_{n}^{-\frac{1}{2}}b_{n}\phi_{n}}_{d}=\b{a,b}_{\ell_{2}}=\sum_{n=1}^{\infty}d_{n}\b{d_{n}^{-\frac{1}{2}}a_{n}}\b{d_{n}^{-\frac{1}{2}}b_{n}}^{*}.
\]
Alternatively, if $u,v\in\H\b d$ have sequences of Fourier coefficients $\hat{u},\hat{v}$, respectively, then
\begin{align*}
\b{u,v}_{d} & =\sum_{n=1}^{\infty}d_{n}\hat{u}_{n}\hat{v}_{n}^{*}, \quad
\norm u_{d} =\sqrt{\sum_{n=1}^{\infty}d_{n}\abs{\hat{u}_{n}}^{2}}=\norm{d^{\frac{1}{2}}\hat{u}}_{\ell^{2}}.
\end{align*}
\end{definition}

For example, if $d_n=\b{1+\norm{\vc\omega_n}_2^{2}}^q$, then $\H\b d=H^{q}\b{\Omega}$; this would be a form of minimum Sobolev norm interpolation \citep{chand13,chand18}, which has been used for PDEs in flat domains previously in a different setup \citep{chand15}. Note that we often refer to $\norm{\cdot}_d$ as the $d$-norm.

\begin{remark} $\tilde{u}$ is the solution to the constrained optimization
problem: 
\begin{align*}
\text{minimize, over \ensuremath{u\in\H\b d}: } & \norm u_{d}\\
\text{subject to: } & \b{\F_{k}u}\b{\x_{k}}=f_{k} \text{, for each }k\in\cb{1,2,\ldots,\tilde N}.
\end{align*}
\end{remark}

\subsection{The Adjoint Range and Connection to Radial Basis Functions}

\label{subsec:adjrange} 
Recall that we want to match certain derivative conditions at each point $\x_{k}\in S$;
there is some collection of linear differential operators $\cb{\F_{k}}_{k=1}^{\tilde{N}}$ so that
\[
\sum_{n=1}^{\infty}d_{n}^{-\frac{1}{2}}\tilde{b}_{n}\b{\F_{k}\phi_{n}}\b{\vc x_{k}}=f_k\text{, for each $k\in \cb{1,2,\ldots,\tilde N}$},
\]
where $\tilde{b}$ is again the solution to (\ref{eq:hbproblem}).
Then, where $p$ is the maximum order of $\F_{k}$ over all $k\in\cb{1,2,\ldots,\tilde N}$, $\L_\text{HB}:\ell^2 \to \C^{\tilde N}$ is bounded as long as 
$\norm{\vc\omega}_2^{p}d^{-\frac{1}{2}}\in\ell^{2}$ (see Proposition \ref{prop:unique}), and elementary functional analysis gives us: 
\begin{align}
\L_\text{HB} & :b\mapsto\b{\sum_{n=1}^{\infty}d_{n}^{-\frac{1}{2}}\b{\F_{k}\phi_{n}}\b{\x_{k}}b_{n}}_{k=1}^{\tilde{N}}\nonumber\\
\L_\text{HB}^{*} & :\vc{\beta}\mapsto\b{\sum_{k=1}^{\tilde{N}}d_{n}^{-\frac{1}{2}}\b{\F_{k}\phi_{n}}^{*}\b{\x_{k}}\beta_{k}}_{n=1}^{\infty}\nonumber\\
\tilde{b} & \in \mathcal{N}\b{\L_\text{HB}}^\perp = \mathcal{R}\b{\L_\text{HB}^*}=%
\text{span}\cb{\b{d_{n}^{-\frac{1}{2}}\b{\F_{k}\phi_{n}}^{*}\b{\x_{k}}}_{n=1}^{\infty}}_{k=1}^{\tilde{N}}\label{eq:kerperp}.
\end{align}

We then define a set of functions $\cb{\psi_k}_1^{\tilde N}$ by:
\begin{align*}
\psi_{k} & :=\sum_{n=1}^{\infty}d_{n}^{-\frac{1}{2}}\b{d_{n}^{-\frac{1}{2}}\b{\F_{k}\phi_{n}}^{*}\b{\x_{k}}}\phi_{n}.
\end{align*}

Then, there exists some function $\tilde{u}\in\text{span}\cb{\psi_{k}}_{k=1}^{\tilde{N}}$
so that for each $i$,
\[
\b{\F_{i}\tilde{u}}\b{\x_{i}}=f_i.
\]

Furthermore, the Fourier coefficients of $\tilde{u}$ minimize $\sum_{n=1}^{\infty}d_{n}\abs{a_{n}}^{2}$
over all possible choices of Fourier coefficients $a_{n}$ that match
the constraint. Finally, let: $\tilde{u}=\sum_{j=1}^{\tilde{N}}\beta_{j}\psi_{j}$, then we want $\vc\beta=\b{\beta_j}_{j=1}^{\tilde N}\in\C^{\tilde N}$ to solve

\[
\sum_{j=1}^{\tilde{N}}\beta_{j}\b{\sum_{n=1}^{\infty}d_{n}^{-1}\b{\F_{j}\phi_{n}}^{*}\b{\x_{j}}\b{\F_{i}\phi_{n}}\b{\x_{i}}} =
\sum_{j=1}^{\tilde{N}}\vc{\Phi}_{ij}\beta_{j} =f_i,
\]
where
\[
\vc{\Phi}_{ij}=\sum_{n=1}^{\infty}d_{n}^{-1}\b{\F_{j}\phi_{n}}^{*}\b{\vc x_{j}}\b{\F_{i}\phi_{n}}\b{\vc x_{i}}=\vc{\Phi}_{ji}^{*}.
\]

Therefore, the system that must be solved for $\vc\beta$ is self-adjoint. Also, for any $\vc{\alpha}\in\C^{\tilde{N}}$,
\begin{align*}
\vc{\alpha}^{*}\vc{\Phi}\vc{\alpha} & =\sum_{i=1}^{\tilde{N}}\sum_{j=1}^{\tilde{N}}\b{\sum_{n=1}^{\infty}d_{n}^{-1}\b{\F_{j}\phi_{n}}^{*}\b{\x_{j}}\b{\F_{i}\phi_{n}}\b{\x_{i}}}\alpha_{i}^{*}\alpha_{j}\\
 & =\sum_{n=1}^{\infty}\b{\sum_{i=1}^{\tilde{N}}d_{n}^{-\frac{1}{2}}\b{\F_{i}\phi_{n}}\b{\x_{i}}\alpha_{i}^{*}}\b{\sum_{j=1}^{\tilde{N}}d_{n}^{-\frac{1}{2}}\b{\F_{j}\phi_{n}}^{*}\b{\x_{j}}\alpha_{j}}\\
 & =\sum_{n=1}^{\infty}\b{\L_\text{HB}^{*}\vc{\alpha}}_{n}^{*}\b{\L_\text{HB}^{*}\vc{\alpha}}_{n} =\norm{\L_\text{HB}^{*}\vc{\alpha}}_{\ell^{2}}^2 \ge0.
\end{align*}

Thus, $\vc \Phi$ is positive semi-definite. Moreover, $\vc{\alpha}^{*}\vc{\Phi}\vc{\alpha}\ne0$ for $\vc{\alpha\ne0}$ due to the existence
of an interpolant (namely, the norm-minimizing interpolant in $\H \b d$) in $\text{span}\cb{\psi_{k}}_{k=1}^{\tilde{N}}$ for any set of interpolation values $\cb{f_i}_{i=1}^{\tilde N}$. This implies uniqueness of $\vc \beta$ %
since the system is square. That is, for any set of interpolation
conditions $\b{\F_{i}\tilde{u}}\b{\vc x_i}=f_i$ for $i\in \cb{1,2,\ldots,\tilde N}$, there is a
unique, $d$-norm minimizing interpolant in $\text{span}\cb{\psi_{k}}_{k=1}^{\tilde{N}}$
such that the system to find the coefficients of the interpolant in
the $\cb{\psi_{k}}_{k=1}^{\tilde{N}}$ basis is self-adjoint and positive
definite.

Also note that $\b{\psi_{k},\psi_{j}}_{d}=\b{\L_\text{HB}^{*}\vc e_{k},\L_\text{HB}^{*}\vc e_{j}}_{\ell^{2}}=\vc{\Phi}_{kj}$.
This is closely related to symmetric Hermite RBF methods for PDEs
\citep[see][]{sun94,frank98}, which have been applied to problems in flat domains and
rely on a similar norm-minimizing property. 

\subsection{Setting up a PDE}

We now set up a PDE problem. Let $S^{\b j}\subset\Omega$ for $j\in\cb{1,2,\ldots,N_S}$ 
all be manifolds, open bulk domains, or single points, where $\Omega$ is a box-shaped domain in $\R^m$.
Define point clouds $S_{\tilde{N}_{j}}^{\b j}:=\cb{\x_{k}^{\b j}}_{k=1}^{\tilde{N}_{j}}\ss S^{\b j}$.
We consider the problem:
\begin{align}
\F^{\b j}u\eval_{S^{\b j}} & =f^{\b j}\text{, for each \ensuremath{j\in\cb{1,2,\ldots, N_S}}},\label{eq:fullpde}
\end{align}
where each $\F^{\b j}$ is a linear differential
 operator of order at most $p$ and $f^{\b j}$ is defined in a neighbourhood of each point in $S^{\b j}$ for each $j$. 
This form covers all strong-form linear PDEs
on manifolds that we may consider.  For example, to solve a Laplace-Beltrami problem on a surface $S$, we may set $S^{\b{1}}=S^{\b{2}}=S$ and $S^{\b{3}}=\partial S$. We could then choose $\F^{\b{1}}u=\Delta u - \n_S\cdot \b{D^2u}\n_S $, $\F^{\b{2}}u=\nabla u-\n_S\b{\nabla u\cdot\n_S}$, $f^{\b 1}=f$, and $f^{\b 2}=0$ so that $\Delta_Su\eval_S=f$ by Corollary \ref{cor:Let--and}. $\F^{\b 3}$ would determine the boundary conditions.

We then have the discretization:

\begin{align}
\text{minimize, over \ensuremath{b\in\ell^{2}}: } & \norm b_{\ell^{2}}\label{eq:pde1}\\
\text{subject to: } & \sum_{n=1}^{\infty}d_{n}^{-\frac{1}{2}}b_{n}\b{\F^{\b j}\phi_{n}}\b{\x_{k}^{\b j}}=f^{\b j}\b{\x_{k}^{\b j}}\text{, for each \ensuremath{j,k}}.\nonumber 
\end{align}

Assuming $\norm{\vc{\omega}}_{2}^{p}d^{-\frac{1}{2}}\in\ell^{2}$, let $\tilde{u}=\sum_{n=1}^{\infty}d_{n}^{-\frac{1}{2}}\tilde{b}_{n}\phi_{n}$
as before, where $\tilde{b}$ is the unique solution to (\ref{eq:pde1}), and the functions $\cb{\phi_n}$ are again Fourier modes on the box $\Omega$ so that $\phi_n \b \x = e^{i\vc \omega_n \cdot \x}$. If $u$
is a solution to the original PDE (\ref{eq:fullpde}), extended to $\Omega$, and $u\in\H\b d$,
then $u$ is a feasible Hermite-Birkhoff interpolant and $\norm{\tilde{u}}_{d}\le\norm u_{d}$. Note that we can always choose $d$ to satisfy $\norm{\vc{\omega}}_{2}^{p}d^{-\frac{1}{2}}\in\ell^{2}$ to be guaranteed a unique solution to (\ref{eq:pde1}) and have $\norm{\tilde{u}}_{d}\le\norm u_{d}<\infty$ as long as an exact solution $u$ is sufficiently smooth. For certain surface PDEs involving the Laplace-Beltrami operator with sufficiently smooth solutions on the surface, an extended, periodic solution $u$ on $\Omega$ with the same number of continuous derivatives will exist due to Proposition \ref{prop-cpexist}.

\subsection{Analysis for Hermite-Birkhoff Problem}\label{subsec:analysis}

Let $h_{\text{max}}$ be the largest fill distance of each point cloud
$S_{\tilde{N}_{j}}^{\b j}$ in
their respective domains $S^{\b j}$. We can
then prove uniform convergence of $\F^{\b j}\tilde{u} \to f^{\b{j}}$; convergence is high-order or super-algebraic with respect to $h_\text{max}$ as long as certain smoothness requirements are met. This, or similar arguments with different norms, is enough to prove high-order convergence of $\tilde{u}$ to a solution to the PDE if the PDE is well-posed with respect to the norms in consideration; this is considered in Proposition \ref{prop:conv}. Proposition \ref{prop:Let--be2} will later show convergence to a solution of the PDE directly under weaker conditions, albeit without rate estimates.

Note for the following results that we typically omit the notation for the restriction of a function when the restricted domain is otherwise clear, such as in a norm. For example, we may write $\norm{\tilde u}_{L^\infty\b{S^{\b j}}}$ to mean $\norm{\tilde u\vert_{S^{\b j}}}_{L^\infty \b{S^{\b j}}}$. Noting that if $S^{\b j}$ is a point, then $\F^{\b j}\tilde u = f^{\b j}$ on that point as long as $S^{\b j}_{\tilde N_j}$ is non-empty, we now cover the case that $S^{\b j}$ has dimension at least one.

\begin{proposition}\label{prop:gamma}Let
$\tilde{b}$ be the solution to (\ref{eq:pde1}) and let $\tilde{u}=\sum_{n=1}^{\infty}d_{n}^{-\frac{1}{2}}\tilde{b}_{n}\phi_{n}$. Assume $S^{\b j}$ is of dimension $m_j\ge1$ and can
be parametrized by a finite atlas $\cb{\sigma^{\b j}_k}_{k=1}^{M}$ where
$\sigma^{\b j}_k:U_{k}^{\b j}\to S^{\b j}_k$ and each $U_{k}^{\b j}\subset\R^{m_j}$
satisfies an interior cone condition. Let $q\in\N$ and assume that each $\sigma_k^{\b j}$ is $C^q$
with bounded derivatives up to order $q$. Also, assume that each $\sigma_k^{\b j}$ has an inverse metric tensor with a bounded norm on $\sigma_k^{\b j}\b{U_k^{\b j}}$. Let $j\in\cb{1,2,\ldots,N_S}$ be fixed. Assume $f^{\b j}$ has bounded derivatives up to order $q$ and $F^{(j,q)}d^{-\frac{1}{2}}\in\ell^2$, where $F^{(j,q)}:=\cb{\max_{\abs{\beta}\le q}\norm{\partial^{\beta}\F^{\b j}\phi_n}_{L^\infty \b {S^{\b j}}}}_{n=1}^\infty$. Then, for small enough $h_{\text{max}}$, there exist constants $\tilde{B}_{j,k,q,\abs{\alpha}}>0$ such that
\[
\norm{\partial^\alpha\b{\b{\F^{\b j}\tilde{u}-f^{\b j}}\circ \sigma_k^{\b j}}}_{L^{\infty}\b{U_k^{\b j}}}\le\tilde{B}_{j,k,q,\abs{\alpha}}h_{\text{max}}^{q-\frac{m_j}{2}-\abs\alpha}\norm u_{d},
\]
where $u=\sum_{n=1}^{\infty}d_{n}^{-\frac{1}{2}}{b}_{n}\phi_{n}\in\H\b d$ is a solution to (\ref{eq:fullpde}), $\tilde C_{j,q}:=\norm{F^{\b{j,q}}d^{-\frac{1}{2}}}_{\ell^2}$, and $\alpha$ is a multi-index with $\abs{\alpha}\in\cb{0,1,2,\ldots,p}$.

\end{proposition}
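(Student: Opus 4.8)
The plan is to exploit the two structural facts behind the construction. First, by the constraints in (\ref{eq:pde1}) the error $e:=\F^{\b j}\tilde u-f^{\b j}$ vanishes exactly on the point cloud $S^{\b j}_{\tilde N_j}$. Second, $\tilde b$ is norm-minimizing and, since $u$ solves (\ref{eq:fullpde}) and lies in $\H\b d$, its Fourier coefficient sequence $b$ is feasible for (\ref{eq:pde1}); hence $\norm{\tilde b}_{\ell^2}=\norm{\tilde u}_d\le\norm u_d=\norm b_{\ell^2}$ and so $\norm{\tilde b-b}_{\ell^2}\le 2\norm u_d$. The simplification that makes the dependence on $\norm u_d$ visible is that $\F^{\b j}u=f^{\b j}$ on $S^{\b j}$, so on $S^{\b j}$, and therefore after composition with any chart $\sigma^{\b j}_k$, we may replace $e$ by $g:=\F^{\b j}\b{\tilde u-u}=\sum_{n=1}^{\infty}d_n^{-\frac12}\b{\tilde b_n-b_n}\F^{\b j}\phi_n$; this also removes any dependence on the off-surface values of $f^{\b j}$.

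Next I would pull back to a chart and bound a Sobolev norm there. Fix $k$. For every multi-index $\beta$ with $\abs\beta\le q$, term-by-term differentiation (legitimate because $F^{\b{j,q}}d^{-\frac12}\in\ell^2$) and Cauchy--Schwarz give
\[
\norm{\partial^\beta g}_{L^\infty\b{S^{\b j}}}\le\sum_{n=1}^{\infty}d_n^{-\frac12}\abs{\tilde b_n-b_n}\,\norm{\partial^\beta\F^{\b j}\phi_n}_{L^\infty\b{S^{\b j}}}\le\norm{F^{\b{j,q}}d^{-\frac12}}_{\ell^2}\norm{\tilde b-b}_{\ell^2}\le 2\,\tilde C_{j,q}\,\norm u_d .
\]
Composing with $\sigma^{\b j}_k$, the chain rule together with the assumed bounds on the derivatives of $\sigma^{\b j}_k$ up to order $q$ controls all derivatives of $g\circ\sigma^{\b j}_k$ of order $\le q$ in $L^\infty\b{U^{\b j}_k}$; since $U^{\b j}_k$ is bounded, $W^{q,\infty}\b{U^{\b j}_k}\hookrightarrow H^q\b{U^{\b j}_k}$, so $g\circ\sigma^{\b j}_k\in H^q\b{U^{\b j}_k}$ with $\norm{g\circ\sigma^{\b j}_k}_{H^q\b{U^{\b j}_k}}\le\hat B_{j,k,q}\,\tilde C_{j,q}\,\norm u_d$. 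Because $e=g$ on $S^{\b j}$, the function $\b{\F^{\b j}\tilde u-f^{\b j}}\circ\sigma^{\b j}_k$ coincides with $g\circ\sigma^{\b j}_k$ and therefore vanishes on the preimage $Z_k\subset U^{\b j}_k$ of $S^{\b j}_{\tilde N_j}$ under $\sigma^{\b j}_k$.

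Then I would apply the scattered-zeros bound. Because $\sigma^{\b j}_k$ does not contract distances (its inverse metric tensor is bounded) and, for $h_{\text{max}}$ small enough, the cloud is dense enough that --- after refining the finite atlas if necessary so it covers $S^{\b j}$ with uniform overlap --- every point of $U^{\b j}_k$ has a point of $Z_k$ within a fixed multiple of $h_{\text{max}}$, the fill distance of $Z_k$ in $U^{\b j}_k$ is $\O\b{h_{\text{max}}}$. Corollary \ref{cor:Let--with}, applied to $g\circ\sigma^{\b j}_k$ on $U^{\b j}_k$ in dimension $m_j$ with Sobolev smoothness index $q$ (when $m_j$ is not odd one may instead invoke the Sobolev-norm scattered-zeros estimates of \cite{narco05} to obtain exactly this exponent), then gives, for $h_{\text{max}}$ small,
\[
\norm{\partial^\alpha\b{g\circ\sigma^{\b j}_k}}_{L^\infty\b{U^{\b j}_k}}\le C\,h_{\text{max}}^{\,q-\frac{m_j}{2}-\abs\alpha}\,\norm{g\circ\sigma^{\b j}_k}_{H^q\b{U^{\b j}_k}}\le\tilde B_{j,k,q,\abs\alpha}\,h_{\text{max}}^{\,q-\frac{m_j}{2}-\abs\alpha}\,\norm u_d ,
\]
which is the claim since $\b{\F^{\b j}\tilde u-f^{\b j}}\circ\sigma^{\b j}_k=g\circ\sigma^{\b j}_k$.

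The functional-analytic core is short: the error vanishes on the cloud, it agrees on $S^{\b j}$ with $\F^{\b j}\b{\tilde u-u}$, whose derivatives of order at most $q$ are bounded on $S^{\b j}$ by $2\tilde C_{j,q}\norm u_d$ by Cauchy--Schwarz and minimality of $\tilde b$, and Corollary \ref{cor:Let--with} supplies the rate. The one place I expect real work is the chart transfer of the fill distance onto $U^{\b j}_k$: this needs the bounded inverse metric tensor to control $\b{\sigma^{\b j}_k}^{-1}$ along geodesics, the bounded first derivatives of $\sigma^{\b j}_k$ together with smoothness of $S^{\b j}$ so that for small $h_{\text{max}}$ Euclidean and intrinsic distances between nearby cloud points are comparable and short geodesics stay inside a single chart, and a uniform-overlap refinement of the atlas to handle points of $U^{\b j}_k$ near $\partial U^{\b j}_k$; the chain-rule bounds for $g\circ\sigma^{\b j}_k$ and the embedding into $H^q\b{U^{\b j}_k}$ are routine.
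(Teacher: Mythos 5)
Your proposal is correct and follows essentially the same route as the paper: feasibility of the extended exact solution plus minimality of $\tilde b$ controls $\norm{\tilde b-b}_{\ell^2}$, Cauchy--Schwarz with $F^{(j,q)}d^{-\frac{1}{2}}\in\ell^2$ bounds the derivatives of $\F^{(j)}(\tilde u-u)$ on $S^{(j)}$, the chain rule transfers this to an $H^q$ bound on the chart domain, and Corollary \ref{cor:Let--with} supplies the rate. The only differences are cosmetic: the paper uses the orthogonality $\tilde b-b\perp\tilde b$ to get $\norm{\tilde b-b}_{\ell^2}\le\norm{b}_{\ell^2}$ where you use the triangle inequality (costing a harmless factor of $2$), and you are somewhat more explicit than the paper about the chart transfer of the fill distance and the half-integer Sobolev index in the scattered-zeros corollary.
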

\begin{proof}
Since the derivatives of $\sigma^{\b j}_k$ are bounded, repeated application
of chain rule shows there exist constants $A_{k,q}$ so that for each
multi-index $\gamma$ of order less than or equal to $q$,
\begin{align*}
& \ \norm{\partial^{\gamma}\b{\F^{\b j}\tilde{u}\circ\sigma^{\b j}_k-f^{\b j}\circ\sigma^{\b j}_k}}_{L^{\infty}\b{U^{(j)}_{k}}} \\
\le& \ A_{k,q}\max_{\abs{\beta}\le q}\norm{\partial^{\beta}\b{\F^{\b j}\tilde{u}-f^{\b j}}}_{L^{\infty}\b{S^{\b j}}}\\
=& \ A_{k,q}\max_{\abs{\beta}\le q}\norm{\partial^{\beta}\b{\F^{\b j}\b{\tilde{u}-u}}}_{L^{\infty}\b{S^{\b j}}}\\
 \le & \ A_{k,q}\max_{\abs{\beta}\le q}\b{\sum_{n=1}^{\infty}d_{n}^{-\frac{1}{2}}\abs{\tilde{b}_n - b_n}\norm{\partial^\beta\F^{\b j}\phi_{n}}_{L^{\infty}\b{S^{\b j}}}
}\\
 \le & \ A_{k,q}\tilde C_{j,q}\norm{\tilde{b} - b}_{\ell^{2}}\text{, by Cauchy-Schwarz}.
\end{align*}

Now we note that $b$ is feasible for (\ref{eq:pde1}), so $\tilde{b}-b\in\mathcal{N}\b{\L_\text{HB}}$ in the notation of Subsection \ref{subsec:adjrange}, so $\tilde{b}-b\perp\tilde b$ by (\ref{eq:kerperp}). Importantly, $\norm{\tilde{b}-b}_{\ell_2}=\sqrt{\norm{b}_{\ell_2}^2-\norm{\tilde b}^2_{\ell_2}}\le\norm{b}_{\ell_2}$.
Then there exist constants $\tilde A_{j,k,q}>0$ so that
\[
\norm{\b{\F^{\b j}\tilde{u}-f^{\b j}}\circ\sigma^{\b j}_k}_{H^{q}\b{U_{k}^{\b j}}}\le \tilde A_{j,k,q}\norm{b}_{\ell^{2}}.
\]

Once again, using Corollary \ref{cor:Let--with}, we have that
there exist constants $\tilde{B}_{k,q,\abs{\alpha},j}>0$ so that, for small enough $h_\text{max}$,
\[
\norm{\partial^\alpha\b{\b{\F^{\b j}\tilde{u}-f^{\b j}}\circ \sigma_k^{\b j}}}_{L^{\infty}\b{U_k^{\b j}}}\le\tilde{B}_{j,k,q,\abs{\alpha}}h_{\text{max}}^{q-\frac{m_j}{2}-\abs\alpha}\norm u_{d}.
\]

Note that Corollary \ref{cor:Let--with} is applied to $U^{\b j}_k$, not $S^{\b j}_k$, so we are implicitly relying on the boundedness of the inverse metric tensor of $\sigma^{\b j}_k$ so that the fill distance on $U^{\b j}_k$ is at most a constant multiple of the fill distance on $S^{\b j}_k$. 
\end{proof}

In particular, notice that we can let $\tilde{B}_{j,q}=\max\cb{\tilde{B}_{j,k,q,0}}_{k=1}^{M}$,
then
\[
\norm{\F^{\b j}\tilde{u}-f^{\b j}}_{L^{\infty}\b{S^{\b j}}}\le\tilde{B}_{j,q}h_{\text{max}}^{q-\frac{m_j}{2}}\norm{u}_{d}.
\]

We note that the $F^{\b{j,q}}d^{-\frac{1}{2}}\in\ell^2$ condition tells us how quickly our choice of $d^{-\frac{1}{2}}$ must decay. If $\F^{\b j}$ is of order $p$ and has bounded coefficient functions with $q$ bounded derivatives, then $F^{\b{j,q}}$ will grow as $\norm{\vc \omega_n}_2^{p+q}$ as $n\to\infty$.

\subsection{Finite Number of Basis Functions}\label{subsec:finite}

We now truncate our Fourier series to a finite basis of $N_{b}$ terms. Let $\L_{N_{b}}:=\L_\text{HB}\eval_{\C^{N_{b}}}:\C^{N_b}\to\C^{\tilde N}$, where $\L_{\text{HB}}$ is as defined in the proof of Proposition \ref{prop:unique}. Let $\vc f:=\b{f^{\b j}\b{\x_{k}^{\b j}}}$
be a vectorized form of all Hermite-Birkhoff data
from (\ref{eq:pde1}) (a column vector using all values of $j$ and $k$). The problem then becomes:

\begin{align}\label{eq:finitebasis}
\text{minimize, for }\vc b\in\C^{N_{b}}: & \norm{\vc b}_{2}\\
\text{subject to: } & \L_{N_{b}}\vc b=\text{proj}_{\mathcal{R}\b{\L_{N_{b}}}}
\vc f\nonumber.
\end{align}

If a Hermite-Birkhoff interpolant exists in $\text{span}\cb{\phi_{n}}_{n=1}^{N_{b}}$,
then

\[
\text{proj}_{\mathcal{R}\b{\L_{N_{b}}}}
\vc f=\vc f.
\]

Let $\tilde{\vc b}$ be the solution to this optimization problem. 
\begin{proposition} \label{prop:Let--and}Assume there is a constant $B_p$ such that for all large enough $n$, $\norm{\F^{\b j}\phi_n}_{L^\infty\b{S^{\b j}}}\le B_p\norm{\vc{\omega}_n}_2^p$ for each $j\in\cb{1,2,\ldots,N_S}$. Also assume $\norm{\vc\omega}_2^{p}d^{-\frac{1}{2}}=\cb{\norm{\vc\omega_n}_2^{p}d_n^{-\frac{1}{2}}}_{n=1}^\infty\in\ell^{2}$. Then, for a fixed number of total Hermite-Birkhoff interpolation conditions $\tilde{N}$, there exists a constant $A_{\tilde{N}}>0$
such that for sufficiently large $N_{b}$,
\[
\norm{\tilde{u}_{\infty}-\tilde{u}_{N_{b}}}_{L^{\infty}\b \Omega}\le A_{\tilde{N}}\b{\sum_{n=N_{b}+1}^{\infty}\norm{\vc{\omega}_{n}}_{2}^{2p}d_{n}^{-1}}^{\frac{1}{2}},
\]
where $\tilde{u}_{\infty}:=\sum_{n=1}^{N_b} d_n^{-\frac{1}{2}} \tilde b_{\infty,n}  \phi_n\in\H\b d$ is from the solution $\tilde b_\infty \in \ell^2$ to
the full $\ell^{2}$ problem (\ref{eq:pde1}) and $\tilde{u}_{N_{b}}$ is the finite
basis solution: $\tilde u_{N_b} = \sum_{n=1}^{N_b} d_n^{-\frac{1}{2}} \tilde b_n \phi_n$, where $\tilde b_n$ is from the solution $\tilde{\vc b}$ to (\ref{eq:finitebasis}). \end{proposition}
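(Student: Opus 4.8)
The plan is to compare the two constrained minimizers $\tilde{b}_\infty \in \ell^2$ and $\tilde{\vc b} \in \C^{N_b}$ (viewed as an element of $\ell^2$ by zero-padding) by exploiting the variational characterization of each as a minimum-norm element of its (affine) feasible set, and then to convert the resulting $\ell^2$-bound on the coefficient difference into an $L^\infty$-bound on the functions via Cauchy--Schwarz against $\cb{\phi_n}$. First I would split $\tilde{u}_\infty - \tilde{u}_{N_b} = \b{\sum_{n=1}^{N_b} d_n^{-\frac12}\b{\tilde b_{\infty,n} - \tilde b_n}\phi_n} + \b{\sum_{n=N_b+1}^\infty d_n^{-\frac12}\tilde b_{\infty,n}\phi_n}$. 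The second (tail) piece is immediately controlled: since $\abs{\phi_n} = 1$ on $\Omega$, its $L^\infty$-norm is at most $\sum_{n>N_b} d_n^{-\frac12}\abs{\tilde b_{\infty,n}}$, and by Cauchy--Schwarz this is bounded by $\norm{d^{-\frac12}}_{\ell^2(\{n>N_b\})}\,\norm{\tilde b_\infty}_{\ell^2}$ --- but we want the weight $\norm{\vc\omega_n}_2^{2p}d_n^{-1}$ appearing in the statement, so instead I pair $d_n^{-\frac12} = \b{\norm{\vc\omega_n}_2^p d_n^{-\frac12}}\cdot\b{\norm{\vc\omega_n}_2^{-p}}$... no: the cleanest route is to note $\norm{\vc\omega_n}_2 \to \infty$, so for $n$ large $\norm{\vc\omega_n}_2^p \ge 1$ and hence $d_n^{-1} \le \norm{\vc\omega_n}_2^{2p}d_n^{-1}$, giving $\sum_{n>N_b} d_n^{-\frac12}\abs{\tilde b_{\infty,n}} \le \b{\sum_{n>N_b}\norm{\vc\omega_n}_2^{2p}d_n^{-1}}^{1/2}\norm{\tilde b_\infty}_{\ell^2}$; and $\norm{\tilde b_\infty}_{\ell^2}$ is a fixed constant.

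The main work is the first piece. The idea is that $\tilde{\vc b}$ is the minimum-norm solution of $\L_{N_b}\vc b = \mathrm{proj}_{\ra(\L_{N_b})}\vc f$, which is exactly the orthogonal projection of $\tilde b_\infty$'s truncation onto an appropriate affine subspace; more directly, let $P_{N_b}$ denote truncation to the first $N_b$ coordinates. Then $P_{N_b}\tilde b_\infty$ satisfies $\L_{N_b}(P_{N_b}\tilde b_\infty) = \L_\text{HB}\tilde b_\infty - \L_\text{HB}((I-P_{N_b})\tilde b_\infty) = \vc f - \vc r$, where $\vc r := \L_\text{HB}((I-P_{N_b})\tilde b_\infty)$ is a small residual. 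By the boundedness estimate in Proposition~\ref{prop:unique} (Cauchy--Schwarz with the weight $\norm{\vc\omega}_2^p d^{-\frac12} \in \ell^2$), $\norm{\vc r}_2 \le \tilde N \cdot \mathrm{const} \cdot B_p \b{\sum_{n>N_b}\norm{\vc\omega_n}_2^{2p}d_n^{-1}}^{1/2}\norm{\tilde b_\infty}_{\ell^2}$, using the hypothesis $\norm{\F^{\b j}\phi_n}_\infty \le B_p\norm{\vc\omega_n}_2^p$. Now both $P_{N_b}\tilde b_\infty$ and $\tilde{\vc b}$ live in $\C^{N_b}$, and $\tilde{\vc b}$ is the minimum-norm solution to $\L_{N_b}\vc b = \mathrm{proj}_{\ra(\L_{N_b})}\vc f$; since $P_{N_b}\tilde b_\infty$ solves $\L_{N_b}\vc b = \vc f - \vc r$, I would write $P_{N_b}\tilde b_\infty - \tilde{\vc b}$ as a member of $\mathcal N(\L_{N_b}) $ plus a correction of size controlled by $\norm{\vc r}_2$ and $\sigma_{\min}^+(\L_{N_b})$ (the smallest nonzero singular value). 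Projecting onto $\mathcal N(\L_{N_b})^\perp$ and using that both $\tilde{\vc b}$ and the minimum-norm solution differ from $P_{N_b}\tilde b_\infty$ only within a distance governed by $\norm{\vc r}_2/\sigma_{\min}^+$, I get $\norm{P_{N_b}\tilde b_\infty - \tilde{\vc b}}_{\ell^2} \le C\norm{\vc r}_2$ for a constant depending on $\L_{N_b}$.

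The technical obstacle --- and the reason the constant $A_{\tilde N}$ depends only on $\tilde N$ and not on $N_b$ --- is ensuring $\sigma_{\min}^+(\L_{N_b})$ is bounded below uniformly in $N_b$ once $N_b$ is large enough. This follows because, for all sufficiently large $N_b$, a Hermite--Birkhoff interpolant already exists in $\mathrm{span}\cb{\phi_n}_{n=1}^{N_b}$ (as noted before the proposition, so that $\mathrm{proj}_{\ra(\L_{N_b})}\vc f = \vc f$ and $\ra(\L_{N_b}) = \C^{\tilde N}$), and the singular values of $\L_{N_b}$ are nondecreasing in $N_b$; hence $\sigma_{\min}^+(\L_{N_b}) \ge \sigma_{\min}(\L_{N_{b,0}}) > 0$ for all $N_b \ge N_{b,0}$, with $N_{b,0}$ depending only on $\tilde N$ and the geometry. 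Finally I recombine: applying Cauchy--Schwarz once more to $\sum_{n=1}^{N_b} d_n^{-\frac12}\abs{\tilde b_{\infty,n} - \tilde b_n}$ with the same weight trick ($d_n^{-1} \le \norm{\vc\omega_n}_2^{2p}d_n^{-1}$ for large $n$, plus a fixed-size contribution from the finitely many small-$n$ terms absorbed into the constant), the $L^\infty$-norm of the first piece is at most $C' \norm{P_{N_b}\tilde b_\infty - \tilde{\vc b}}_{\ell^2} \le C'' \norm{\vc r}_2 \le A_{\tilde N}\b{\sum_{n>N_b}\norm{\vc\omega_n}_2^{2p}d_n^{-1}}^{1/2}$, and adding the tail piece (already of the required form) gives the stated bound after enlarging $A_{\tilde N}$.
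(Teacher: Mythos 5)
Your strategy---compare $\tilde{\vc b}$ with the truncation $P_{N_b}\tilde b_\infty$, bound the residual $\vc r=\L_\text{HB}\b{\b{I-P_{N_b}}\tilde b_\infty}$, and control the pseudoinverse uniformly in $N_b$---is genuinely different from the paper's proof, which never estimates $\norm{P_{N_b}\tilde b_\infty-\tilde{\vc b}}_{2}$ directly. The paper instead compares the scalars $\norm{\tilde{\vc b}}_{2}$ and $\norm{\tilde b_\infty}_{\ell^2}$ via a feasibility argument and then uses the Pythagorean identity $\norm{\tilde b_\infty-\mathcal E\tilde{\vc b}}_{\ell^2}^2=\norm{\tilde{\vc b}}_{2}^2-\norm{\tilde b_\infty}_{\ell^2}^2$, which costs a square root and yields a bound through $\sqrt{\lc_{\text{trunc}}}$; your route, if completed, gives a bound linear in $\norm{\vc r}_2=\lc_{\text{trunc}}$, which is at least as strong. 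Your uniform control of $\sigma_{\min}^{+}\b{\L_{N_b}}$ via monotonicity of the singular values of $\vc V_{N_b}\vc V_{N_b}^*$ is correct and matches the paper's conclusion that $\norm{\vc V_{N_b}^{+}}_2\le M_{\tilde N}$ for $N_b\ge N_{b,\text{interp}}$. Your treatment of the tail piece and of $\norm{\vc r}_2$ is also fine (a factor $\sqrt{\tilde N}$ suffices, and for the head piece the weight trick is unnecessary since $\norm{d^{-\frac{1}{2}}}_{\ell^2}<\infty$ already follows from the hypotheses).

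The gap is the step $\norm{P_{N_b}\tilde b_\infty-\tilde{\vc b}}_{2}\le C\norm{\vc r}_2$. From $\L_{N_b}\b{P_{N_b}\tilde b_\infty-\tilde{\vc b}}=-\vc r$ you only learn that this difference is an element of $\mathcal N\b{\L_{N_b}}$ plus $-\vc V_{N_b}^{+}\vc r$; for a generic particular solution the null-space component is \emph{not} governed by $\norm{\vc r}_2/\sigma_{\min}^{+}$, and projecting onto $\mathcal N\b{\L_{N_b}}^{\perp}$ bounds only the projection, not the difference. As written, the key inequality is asserted rather than proved. What closes the gap is that the null-space component is in fact zero: $\tilde{\vc b}\perp\mathcal N\b{\L_{N_b}}$ because it is the minimum-norm solution, and $P_{N_b}\tilde b_\infty\perp\mathcal N\b{\L_{N_b}}$ as well, because $\tilde b_\infty\in\mathcal{R}\b{\L_\text{HB}^{*}}=\mathcal N\b{\L_\text{HB}}^{\perp}$ by (\ref{eq:kerperp}), every $\vc v\in\mathcal N\b{\L_{N_b}}$ zero-pads to an element of $\mathcal N\b{\L_\text{HB}}$ supported on the first $N_b$ coordinates, and hence $\b{P_{N_b}\tilde b_\infty,\vc v}=\b{\tilde b_\infty,\mathcal E\vc v}_{\ell^2}=0$. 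With that observation $P_{N_b}\tilde b_\infty$ is exactly the minimum-norm solution of the perturbed system, so $P_{N_b}\tilde b_\infty-\tilde{\vc b}=-\vc V_{N_b}^{+}\vc r$ and your estimate follows with $C=M_{\tilde N}$. You must state and use this orthogonality explicitly---it is the same structural fact the paper leans on, just deployed differently.
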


The approach of the proof is to note that if an interpolant exists
for finite $N_{b}$, it is also feasible for the $\ell^{2}$ problem,
while the truncated solution to the $\ell^{2}$ problem will “nearly”
be an interpolant and will be close to a feasible solution for the
finite basis problem. Along with orthogonality of the minimizer to
the null space, this turns out to be sufficient to show convergence proportional
to $\b{\sum_{n=N_{b}+1}^{\infty}\norm{\vc{\omega}_{n}}_{2}^{2p}d_{n}^{-1}}^{\frac{1}{2}}$
of the truncated basis solution to the $\ell^{2}$ problem. The idea here is to show that adding terms to our Fourier basis recovers the solution to the full $\ell^2$ problem in the limit $N_b\to\infty$.

\begin{proof}
To start, let $\tilde{N}$ be fixed. Let
$\vc V_{N_{b}}$ be the matrix corresponding
to $\L_{N_{b}}$, so that
\begin{align*}
\vc V_{N_{b}}\tilde{\vc{b}} & =\vc f,
\end{align*}
then the unique solution to the finite-dimensional problem is
\[
\tilde{\vc b}=\vc V_{N_{b}}^{+}\vc f,
\]
where $^{+}$ indicates the Moore-Penrose pseudoinverse. Now, let
$N_{b,\text{interp}}$ be the smallest number of basis functions for
which $\L_{N_{b}}$ is surjective (such a finite value always exists, see Remark \ref{rem:interp}). Note that if
$\tilde{\vc b}_{N_{b,\text{interp}}}$ is the solution to the optimization
problem for $N_{b,\text{interp}}$ basis functions, then $\tilde{\vc b}_{N_{b,\text{interp}}}$
padded with zeros to be in $\C^{N_{b}}$ is feasible for any $N_{b}>N_{b,\text{interp}}$.
Therefore, 
\[
\norm{\tilde{\vc b}}_{2}\le\norm{\tilde{\vc b}_{N_{b,\text{interp}}}}_{2}\text{, for any }N_{b}>N_{b,\text{interp}}.
\]

This holds for any $\vc f$, so if we define $M_{\tilde{N}}=\norm{\vc V_{N_{b,\text{interp}}}^{+}}_{2}$
, then as long as $N_{b}\ge N_{b,\text{interp}}$,
\[
\norm{\vc V_{N_{b}}^{+}}_{2}\le M_{\tilde{N}}.
\]

In particular, $\norm{\vc V_{N_{b}}^{+}}_{2}$ is bounded
as $N_{b}\to\infty$. Now define $\tilde{b}_{\infty}$ as the solution
to the full $\ell^{2}$ problem, and define the truncated $\tilde{\vc b}_{\infty}^{\b{N_{b}}}=\b{\tilde{b}_{\infty,n}}_{n=1}^{N_{b}}$.
Then, define
\[
\lc_{\text{trunc}}:=\norm{\vc f-\vc V_{N_{b}}\tilde{\vc b}_{\infty}^{\b{N_{b}}}}_{2},
\]
and note that
\[
\vc V_{N_{b}}\b{\tilde{\vc b}_{\infty}^{\b{N_{b}}}+\vc V_{N_{b}}^{+}\b{\vc f-\vc V_{N_{b}}\tilde{\vc b}_{\infty}^{\b{N_{b}}}}}=\vc f.
\]

Therefore, since $\tilde{\vc b}$ is the minimizer,
\begin{align}
\norm{\tilde{\vc b}_{\infty}^{\b{N_{b}}}+\vc V_{N_{b}}^{+}\b{\vc f-\vc V_{N_{b}}\tilde{\vc b}_{\infty}^{\b{N_{b}}}}}_{2} & \ge\norm{\tilde{\vc b}}_{2}\nonumber \\
\implies M_{\tilde{N}}\lc_{\text{trunc}} \ge\norm{\tilde{\vc b}}_{2}-\norm{\tilde{\vc b}_{\infty}^{\b{N_{b}}}}_{2}
&\ge \norm{\tilde{\vc b}}_{2}-\norm{\tilde{b}_{\infty}}_{\ell^{2}} \ge 0,\label{eq:trunc}
\end{align}
where we note $\norm{\tilde{\vc b}}_{2}\ge\norm{\tilde{b}_{\infty}}_{\ell^{2}}\ge\norm{\tilde{\vc b}_{\infty}^{\b{N_{b}}}}_{2}$
since $\tilde{\vc b}$ padded with zeros is feasible for the full
$\ell^{2}$ problem. More precisely, let $\mathcal{E}:\C^{N_{b}}\to\ell^{2}$ so that
$\b{\mathcal{E}\vc b}_{n}=b_{n}$ for $n\le N_{b}$ and zero otherwise,
then $\mathcal{E}\tilde{\vc b}$ is feasible for the $\ell^{2}$
problem and $\norm{\tilde{\vc b}}_{2}=\norm{\mathcal{E}\tilde b}_{\ell^2}\ge\norm{\tilde{b}_{\infty}}_{\ell^{2}}$. $\tilde{b}_{\infty}$ is the minimizer and is therefore orthogonal
to the null space of $\L_\text{HB}$, so
$\norm{\tilde{b}_{\infty}}_{\ell^{2}}^{2} =\b{\tilde{b}_{\infty},\mathcal{E}\tilde{\vc b}}_{\ell^{2}}$ and

\begin{align*}
\norm{\tilde{b}_{\infty}-\mathcal{E}\tilde{\vc b}}_{\ell^{2}}^{2}  %
  =\norm{\tilde{\vc b}}_{2}^{2}-\norm{\tilde{b}_{\infty}}_{\ell^{2}}^{2}
  \le\b{\norm{\tilde{b}_{\infty}}_{\ell^{2}}+\norm{\tilde{\vc b}}_{2}}M_{\tilde{N}}\lc_{\text{trunc}},
\end{align*}
where we use equation (\ref{eq:trunc}). Then,

\begin{align*}
\norm{\tilde{b}_{\infty}}_{\ell^{2}}+\norm{\tilde{\vc b}}_{2} & =\norm{\tilde{b}_{\infty}}_{\ell^{2}}+\norm{\tilde{\vc b}}_{2}-\norm{\tilde{b}_{\infty}}_{\ell^{2}}+\norm{\tilde{b}_{\infty}}_{\ell^{2}}\\
 & \le2\norm{\tilde{b}_{\infty}}_{\ell^{2}}+M_{\tilde{N}}\lc_{\text{trunc}}\\
 \implies \norm{\tilde{b}_{\infty}-\mathcal{E}\tilde{\vc b}}_{\ell^{2}}^{2} &\le\b{2\norm{\tilde{b}_{\infty}}_{\ell^{2}}+M_{\tilde{N}}\lc_{\text{trunc}}}M_{\tilde{N}}\lc_{\text{trunc}},
\end{align*}
where we again use (\ref{eq:trunc}). Now, using the line above and Cauchy-Schwarz,
\begin{align}
\norm{\tilde{u}_{\infty}-\tilde{u}_{N_{b}}}_{L^{\infty}\b \Omega} & =\norm{\b{\sum_{n=1}^{\infty}d_{n}^{-\frac{1}{2}}\tilde{b}_{\infty,n}\phi_{n}}-\b{\sum_{n=1}^{N_{b}}d_{n}^{-\frac{1}{2}}\tilde{b}_{n}\phi_{n}}}_{L^{\infty}\b \Omega}\nonumber \\
 & \le\b{\abs{\tilde{b}_{\infty}-\mathcal{E}\tilde{\vc b}},d_{n}^{-\frac{1}{2}}}_{\ell^2}\nonumber \\
 & \le\norm{d^{-\frac{1}{2}}}_{\ell^{2}}\sqrt{\b{2\norm{\tilde{b}_{\infty}}_{\ell^{2}}+M_{\tilde{N}}\lc_{\text{trunc}}}M_{\tilde{N}}\lc_{\text{trunc}}}.\label{eq:diff}
\end{align}

Also,
\begin{align*}
\lc_{\text{trunc}}^2 & = \sum_{j=1}^{N_S} \sum_{k=1}^{\tilde{N}_j}\abs{f^{\b j}\b{\x^{\b{j}}_{k}}-\sum_{n=1}^{N_{b}}d_{n}^{-\frac{1}{2}}\tilde{b}_{\infty,n}\F^{\b j}\phi_{n}\b{\x^{\b j}_{k}}}^{2}\\
 & =\sum_{j=1}^{N_S} \sum_{k=1}^{\tilde{N}_j}\abs{\sum_{n=N_b + 1}^{\infty}d_{n}^{-\frac{1}{2}}\tilde{b}_{\infty,n}\F^{\b j}\phi_{n}\b{\x^{\b j}_{k}}}^{2} \\
 & \le B_p^2\tilde{N}\b{\sum_{n=N_{b}+1}^{\infty}\abs{\tilde{b}_{\infty,n}}^{2}}\b{\sum_{n=N_{b}+1}^{\infty}\norm{\vc \omega_n}_2^{2p} d_{n}^{-1}}\text{, by Cauchy-Schwarz},
\end{align*}
where the last line holds for large enough $N_b$ by assumption on $\F^{\b j}$. Finally, from (\ref{eq:diff}),
\begin{align*}
 & \norm{\tilde{u}_{\infty}-\tilde{u}_{N_{b}}}_{L^{\infty}\b \Omega}\\
\le & \norm{d^{-\frac{1}{2}}}_{\ell^{2}}\b{2\norm{\tilde{b}_{\infty}}_{\ell^{2}}+M_{\tilde{N}}B_p\sqrt{\tilde{N}}\b{\sum_{n=N_{b}+1}^{\infty}\abs{\tilde{b}_{\infty,n}}^{2}}^{\frac{1}{2}}\b{\sum_{n=N_{b}+1}^{\infty}\norm{\vc \omega_n}_2^{2p}d_{n}^{-1}}^{\frac{1}{2}}}^{\frac{1}{2}}\\
 & \quad\cdot\b{M_{\tilde{N}}B_p\sqrt{\tilde{N}}\b{\sum_{n=N_{b}+1}^{\infty}\abs{\tilde{b}_{\infty,n}}^{2}}^{\frac{1}{2}}\b{\sum_{n=N_{b}+1}^{\infty}\norm{\vc \omega_n}_2^{2p}d_{n}^{-1}}^{\frac{1}{2}}}^{\frac{1}{2}}.
\end{align*}

Now, from equation (\ref{eq:kerperp}) in Subsection \ref{subsec:adjrange}, we recall $\tilde{b}_{\infty}\in\mathcal{R}\b{\L_\text{HB}^{*}}$.
This means $\abs{\tilde{b}_{\infty,n}}=\O\b{\norm{\vc{\omega}_{n}}_{2}^{p}d_{n}^{-\frac{1}{2}}}$ as $n\to\infty$.
From the bound above, we can conclude that for large enough $N_{b}$,
there exists some constant $A_{\tilde{N}}$ such that
\[
\norm{\tilde{u}_{\infty}-\tilde{u}_{N_{b}}}_{L^{\infty}\b \Omega}\le A_{\tilde{N}}\b{\sum_{n=N_{b}+1}^{\infty}\norm{\vc{\omega}_{n}}_{2}^{2p}d_{n}^{-1}}^{\frac{1}{2}}.
\]
That is, $\tilde{u}_{N_{b}}\to\tilde{u}_{\infty}$ uniformly on $\Omega$ as long as $\norm{\vc{\omega}}_{2}^{p}d^{-\frac{1}{2}}\in\ell^{2}$.
\end{proof}

The $A_{\tilde{N}}$ coefficient of the Proposition \ref{prop:Let--and}
could be problematic in theory, but in practice, we can make $\b{\sum_{n=N_{b}+1}^{\infty}\norm{\vc{\omega}_{n}}_{2}^{2p}d_{n}^{-1}}^{\frac{1}{2}}$ converge
arbitrarily quickly in $N_{b}$ with a suitable choice of $d_n$, so
any highly underdetermined system will produce reasonable results. Furthermore, $M_{\tilde N}$, which is a term in $A_{\tilde{N}}$, could be replaced by $\norm{\vc V_{N_b}^+}_2$, which is non-increasing and will typically decrease as $N_b$ increases.

\begin{remark}\label{rem:interp}
In 1D, an interpolant for a combination of function or derivative
interpolation conditions (a Hermite-Birkhoff interpolant) exists under certain conditions when $N_{b}=\tilde{N}$ \citep{johns75}, but this
is not necessarily the case in dimensions greater than one, again due
to the Mairhuber-Curtis Theorem \citep{curti59,mairh56}. An interpolant
does, however, always exist with some finite number of basis functions.

In particular, it can be quickly shown that $N_{b,\text{interp}}\le \b{\b{p+1}\tilde N_\text{distinct}+1}^m$, where $\tilde N_\text{distinct}\le\tilde{N}$ is the number of distinct points in the box $\Omega\subset\R^m$. $\tilde N_\text{distinct}$ can be less than $\tilde{N}$ if one or more points have two or more interpolation conditions imposed. For $m=1$, it has been known for some time \citep[Thm. 1]{johns75} that we can always construct a trigonometric polynomial with at most $\b{p+1}\tilde N_\text{distinct}+1$ terms that is zero and has all derivatives of up to order $p$ equal to zero on up to $\tilde N_\text{distinct}$ distinct points of our choosing, except for one chosen point where either the function or one of its derivatives are equal to one. For $m>1$, we can take products of these 1D functions to get a trigonometric polynomial with at most $\b{\b{p+1}\tilde N_\text{distinct}+1}^m$ terms that is zero and has derivatives equal to zero on a tensor grid of $\tilde{N}_\text{distinct}^m$ points (that our desired $\tilde{N}_\text{distinct}$ points are a subset of), except for one point and derivative of interest. A linear combination of these functions is a feasible interpolant. Experimentally, however, we typically observe $N_{b,\text{interp}}$ to simply be the total number of interpolation conditions ($\tilde{N}$) or not much larger (less than $2\tilde{N}$).

Theorem 2.2 of \cite{chand18} provides an upper bound on $N_{b,\text{interp}}$ that depends on point separation and will often be less than the simple bound presented above. Note that increasing $N_b$ well beyond $N_{b,\text{interp}}$ can improve the error, in accordance with Proposition \ref{prop:Let--and}.

\end{remark}

\subsection{Boundedness, Solvability, and Convergence\label{subsec:A-Note-on}}

In many applications, we are not only interested in finding solutions to a PDE; we may also want to first determine the values of parameters for which the PDE has a solution at all. These are existence problems, which can often be difficult to
analyze numerically. Of particular interest in numerical analysis are eigenvalue problems, where we can find the eigenvalues of $\mathcal{F}$ by finding
the values of $\lambda\in\C$ for which $\b{\mathcal{F}-\lambda}u=0$
has a non-zero solution.

Consider, for some linear differential operators $\mathcal{F,G}$ of order at most
$p$, the PDE:
\begin{equation}\label{eq:fullprob}
\mathcal{F}u =f\text{, on }S,\quad 
\mathcal{G}u\eval_{\partial S} =g.
\end{equation}

To discretize this problem, let $S_{\tilde{N}}:=\cb{\x_{j}}_{j=1}^{\tilde{N}}\subset S$ and
$\partial S_{\tilde{N}_{\partial}}:=\cb{\y_{j}}_{j=1}^{\tilde{N}_{\partial}}\subset\partial S$ be sets of points. Then, let $\vc a \in S$ be fixed. The discretized problem is then
\begin{align}
\text{minimize, over \ensuremath{u^{\b{\tilde{N}}}\in\H\b d}: } & \norm{u^{\b{\tilde{N}}}}_{d}\label{eq:finiteprob}\\
\text{subject to: } & \mathcal{F}u^{\b{\tilde{N}}}\eval_{S_{\tilde{N}}}=f\nonumber,\
\ \mathcal{G}u^{\b{\tilde{N}}}\eval_{\partial S_{\tilde{N}_{\partial}}}=g\nonumber,\
 u^{\b{\tilde{N}}}\b{\vc a}=1,\nonumber 
\end{align}
where we only include the $u^{\b{\tilde{N}}}\b{\vc a}=1$ condition in the case $f=g=0$ to ensure we get a non-zero solution.
If there is a non-zero solution $u$ to the original PDE in $\H\b d$ (possibly requiring $u\b{\vc a}=1$ as well),
then $u$ is feasible, and $\norm{u^{\b{\tilde{N}}}}_{d}\le\norm u_{d}$
for all $\tilde{N}$. We have shown convergence of $\mathcal{F}u^{\b{\tilde{N}}}\to \mathcal{F}u$
in this case (Proposition \ref{prop:gamma}). However, particularly for eigenvalue
problems, we may also be interested in sufficient conditions for
a solution to exist in $\H\b d$, and for $u^{\b{\tilde N}}$ to converge to a solution. Note that if $\F$ can be written $\F^{\b 1}+c\F^{\b 2}$ for some function $c$, and we instead impose $\F^{\b 1}=f$ and $\F^{\b 2}=0$ on $S_{\tilde N}$, the following analysis does not change; this is typically how we write the Laplace-Beltrami operator, using Corollary \ref{cor:Let--and}.

In Theorem 7.1 of the work by \cite{chand18}, the authors prove a result that, in our notation, would show $\F u\to f$ and $\mathcal{G} u \to g$ pointwise when a solution to (\ref{eq:fullprob}) exists. Our next result adds to this for our approach by noting that mere boundedness of $u^{\b{\tilde{N}}}$ (which would be implied by feasibility) is enough to imply that a solution to (\ref{eq:fullprob}) must exist, and that $u^{\b{\tilde{N}}}$ will converge to a solution $u$ in the $d$-norm. For choices of $d$ such that $\norm{\vc{\omega}}_2^{p}d^{-\frac{1}{2}}\in\ell^{2}$, this implies uniform convergence of $u^{\b{\tilde{N}}}$ and its derivatives of up to order $p$ to $u$ over all of $\Omega$. %

\begin{proposition}
\label{prop:Let--be2}Let $\mathcal{F,G}$ be $p$-th or lower order linear
differential operators with bounded coefficient functions, and assume $\norm{\vc{\omega}}_2^{p}d^{-\frac{1}{2}}\in\ell^{2}$.
Let $S_{1}\subset S_{2}\subset\cdots\subset S$, where $S_{n}=\cb{\x_{j}}_{j=1}^{\tilde{N}_{n}}$ such that
$\cb{\tilde{N}_{n}}$ is strictly increasing. Let $\partial S_{1}\subset\partial S_{2}\subset\cdots\subset\partial S$,
where $\partial S_{n}=\cb{\y_{j}}_{j=1}^{\tilde{N}_{\partial,n}}$ such that
$\cb{\tilde{N}_{\partial,n}}$ is strictly increasing. Let $u^{\b n}$
be the corresponding solution to (\ref{eq:finiteprob}) on $S_n$ and $\partial S_n$. If, for all $n$, $\norm{u^{\b n}}_{d}\le\tilde{Q}$
for some $\tilde{Q}>0$, and $h_{\text{max}}^{\b n}\to0$, where $h_{\text{max}}^{\b n}$
is the fill distance associated with $S_{n}$ and $\partial S_{n}$,
then there exists $u\in\H\b d$ so that $u\eval_{S}$ is a solution
to (\ref{eq:fullprob}) and $u^{\b n}\to u$ in $\H\b d$ and all derivatives of $u^{\b n}$ up to order $p$ converge uniformly to the derivatives of $u$
on $\Omega$.
\end{proposition}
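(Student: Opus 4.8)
The plan is to extract a convergent subsequence via a weak-compactness argument in $\H\b d$, identify the weak limit as a genuine solution, and then bootstrap weak convergence to strong convergence using the norm-minimizing property. First, since $\norm{u^{\b n}}_d \le \tilde Q$ for all $n$, the sequence $\cb{u^{\b n}}$ is bounded in the Hilbert space $\H\b d$, so by the Banach--Alaoglu theorem there is a subsequence (not relabeled) converging weakly to some $u \in \H\b d$ with $\norm u_d \le \tilde Q$. Because $\norm{\vc\omega}_2^p d^{-\frac12} \in \ell^2$, the evaluation functionals $v \mapsto \b{\F^{\b j}v}\b{\x}$ and $v\mapsto\b{\mathcal G v}\b{\y}$ are bounded linear functionals on $\H\b d$ (this is the Cauchy--Schwarz estimate already used in Proposition \ref{prop:unique}, applied termwise to the Fourier coefficients), so weak convergence passes through them: for each fixed point $\x_j \in S_n$, since $\x_j$ eventually lies in every $S_m$ with $m\ge n$ and $\b{\F u^{\b m}}\b{\x_j}=f\b{\x_j}$ for all such $m$, we get $\b{\F u}\b{\x_j}=f\b{\x_j}$, and similarly for the boundary conditions and (if present) the normalization $u\b{\vc a}=1$. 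Hence $u$ satisfies every interpolation condition on $\bigcup_n S_n$ and $\bigcup_n \partial S_n$.

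Next I would upgrade "$u$ interpolates on a dense set" to "$u$ solves the PDE." The sets $\bigcup_n S_n$ and $\bigcup_n\partial S_n$ have fill distance zero in $S$ and $\partial S$ respectively (since $h_{\text{max}}^{\b n}\to 0$), so they are dense. Since $u\in\H\b d$ and $\norm{\vc\omega}_2^p d^{-\frac12}\in\ell^2$, $\F u$ and $\mathcal G u$ are continuous functions on $\Omega$ (their Fourier coefficients are absolutely summable after multiplication by the symbol of the operator — again the $\ell^2 \times \ell^2$ pairing). Continuous functions vanishing on a dense subset of $S$ (resp.\ $\partial S$) vanish on all of $S$ (resp.\ $\overline{\partial S}$, assuming $S$ is the closure of its interior so that continuity extends to $S$), giving $\F u = f$ on $S$ and $\mathcal G u\eval_{\partial S}=g$. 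Thus $u\eval_S$ solves (\ref{eq:fullprob}), and in the eigenvalue case $u\b{\vc a}=1$ forces $u\not\equiv 0$.

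Now for strong convergence. Once we know $u$ itself is feasible for problem (\ref{eq:finiteprob}) on \emph{every} $S_n,\partial S_n$, minimality gives $\norm{u^{\b n}}_d \le \norm u_d$ for all $n$. Combined with weak lower semicontinuity of the norm, $\norm u_d \le \liminf \norm{u^{\b n}}_d \le \limsup\norm{u^{\b n}}_d \le \norm u_d$, so $\norm{u^{\b n}}_d \to \norm u_d$; weak convergence plus convergence of norms in a Hilbert space yields $u^{\b n}\to u$ strongly in $\H\b d$. Finally, strong convergence in the $d$-norm means $\norm{d^{\frac12}\b{\hat u^{\b n}-\hat u}}_{\ell^2}\to 0$, and since $\norm{\vc\omega}_2^p d^{-\frac12}\in\ell^2$, another Cauchy--Schwarz bounds $\norm{\partial^\alpha\b{u^{\b n}-u}}_{L^\infty\b\Omega}\le \norm{\norm{\vc\omega}_2^{|\alpha|}d^{-\frac12}}_{\ell^2}\norm{u^{\b n}-u}_d \le \norm{\norm{\vc\omega}_2^{p}d^{-\frac12}}_{\ell^2}\norm{u^{\b n}-u}_d$ for $|\alpha|\le p$, giving uniform convergence of all derivatives up to order $p$. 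The last step to address is that we have only extracted a \emph{subsequence}: but a standard argument fixes this — any subsequence of $\cb{u^{\b n}}$ has a further weakly convergent sub-subsequence, and the limit necessarily satisfies the same (dense) set of interpolation conditions and hence solves the same PDE; if the solution in $\H\b d$ of minimal norm subject to those conditions is unique (which it is, since the feasible set is nonempty, closed, and convex — the argument of Proposition \ref{prop:unique}), all subsequential limits coincide, so the whole sequence converges.

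\textbf{Main obstacle.} The delicate point is the interplay of the two roles the $\norm{\vc\omega}_2^p d^{-\frac12}\in\ell^2$ hypothesis plays: it must simultaneously make the constraint functionals weakly continuous on $\H\b d$ (so the limit inherits the interpolation conditions) and make $\F u,\mathcal G u$ genuinely continuous (so density of the point sets forces the PDE). Both are instances of the same Cauchy--Schwarz estimate, so this is not hard in itself; the real subtlety is the logical ordering — one must establish that the weak limit $u$ is feasible for \emph{all} the finite problems \emph{before} invoking minimality to get $\norm{u^{\b n}}_d\le\norm u_d$, and one must handle the subsequence/uniqueness bookkeeping carefully so the conclusion is about the full sequence. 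I would also be careful about the regularity assumed of $S$ and $\partial S$ (that they are closures of open sets, so that a continuous function vanishing on a dense subset vanishes everywhere), which should be stated among the standing hypotheses.
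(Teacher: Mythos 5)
Your argument is correct, but it takes a genuinely different route from the paper. You use the direct method: Banach--Alaoglu to extract a weakly convergent subsequence, weak continuity of the (bounded) constraint functionals to show the limit is feasible for every finite problem and hence, by density and continuity, solves the PDE, then weak lower semicontinuity plus minimality to get norm convergence and therefore strong convergence, and finally a subsequence-uniqueness argument to recover the full sequence. The paper avoids compactness entirely: it observes that nestedness of the point sets makes $u^{\b m}$ feasible for the $n$-th problem whenever $m>n$, so $\norm{u^{\b n}}_d$ is non-decreasing and bounded, hence convergent; the minimizer's orthogonality to the null space of the constraint operator then gives $\b{u^{\b m}-u^{\b n},u^{\b n}}_d=0$, so $\norm{u^{\b n}-u^{\b m}}_d^2=\norm{u^{\b m}}_d^2-\norm{u^{\b n}}_d^2$, and the sequence is Cauchy outright. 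The paper's route is more elementary (no weak topology, no subsequence bookkeeping) and yields a quantitative link between the norm increments and $\norm{u^{\b n}-u^{\b m}}_d$; yours is a standard variational template that generalizes more readily to non-nested constraint sets. One step you should make explicit: in the final subsequence argument you assert that all weak subsequential limits coincide because the minimal-norm feasible element is unique, but you only establish that each limit \emph{lies in} the limiting constraint set $C_\infty$. You need the extra line that any feasible $w\in C_\infty$ is feasible for every finite problem, so $\norm{u^{\b n}}_d\le\inf_{w\in C_\infty}\norm w_d$, whence weak lower semicontinuity forces every subsequential limit to be the minimizer of $\norm{\cdot}_d$ over $C_\infty$ and not merely a member of it. All the ingredients are already in your write-up, so this is a presentational gap rather than a mathematical one; your caveat about $S$ being the closure of its interior is unnecessary, since a continuous function vanishing on a dense subset of $S$ vanishes on $S$ regardless.
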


\begin{proof}
First, note that for all $m>n$, $u^{\b m}$ is feasible for (\ref{eq:finiteprob})
on $S_{n},\partial S_{n}$, since $S_{n}\subset S_{m},\partial S_{n}\subset\partial S_{m}$.
The fact that $u^{\b n}$ is the solution with minimum $d$-norm implies
$\cb{\norm{u^{\b n}}_d}_n$ is a non-decreasing sequence. $\cb{\norm{u^{\b n}}_d}_n$
is bounded above by $\tilde{Q}$, and therefore $\norm{u^{\b n}}_{d}\uparrow Q$
for some $Q>0$ ($u^{\b n}$ cannot be zero since either $u^{\b n}\b{\vc a}=1$ or one of $f,g$ is non-zero,
and $u^{\b n}$ is $C^p$ since $\norm{\vc{\omega}}_2^{p}d^{-\frac{1}{2}}\in\ell^{2}$
implies uniform convergence of the $u^{\b n}$ Fourier series and its derivatives of up to order $p$). 

Let $\lc>0$. Then, since, $\norm{u^{\b n}}_{d}\uparrow Q$, there exists some $M>0$ so that for all
$n,m>M$ such that $n<m$,
\begin{equation}
0\le\norm{u^{\b m}}_{d}-\norm{u^{\b n}}_{d}<\frac{\lc^{2}}{2Q}.\label{eq:normdiff}
\end{equation}

Now, recall that $u^{\b m}$ is feasible for (\ref{eq:finiteprob}) on $S_{n},\partial S_{n}$,
since $S_{n}\subset S_{m}$ and $\partial S_{n}\subset\partial S_{m}$, so
\begin{align}
\b{u^{\b m}-u^{\b n},u^{\b n}}_{d} & =0\text{, since \ensuremath{u^{\b n}}is the minimal solution}\nonumber \\
\implies\b{u^{\b m},u^{\b n}}_{d}=\b{u^{\b n},u^{\b m}}_{d} & =\norm{u^{\b n}}_{d}^{2}.\label{eq:inner}
\end{align}

Using (\ref{eq:inner}) and then (\ref{eq:normdiff}), it can then be quickly shown that
\begin{align*}
\norm{u^{\b n}-u^{\b m}}_{d} &\le \lc.
\end{align*}

Therefore, $\cb{u^{\b n}}$ is Cauchy and must converge to some $u$
in the $d$-norm since $\mathcal{H}\b d$ is a Hilbert space. Let $\hat{u}^{\b n}$ and $\hat{u}$ be the Fourier
coefficients of $u^{\b n}$ and $u$, respectively. Then, for all $q\le p$, and for any $q$-th order multi-index $\alpha$,
\begin{align*}
\norm{\partial^{\alpha}\b{u^{\b n}-u}}_{L^{\infty}\b{\Omega}} %
 &\le \sum_{j=1}^{\infty}\norm{\vc{\omega}_{j}}_2^{q}d_j^{-\frac{1}{2}}d_j^{\frac{1}{2}}\abs{\hat{u}_{j}^{\b n}-\hat{u}_{j}}\\
 & \le \norm{\norm{\vc{\omega}}_2^{p}d^{-\frac{1}{2}}}_{\ell^{2}}\norm{u^{\b n}-u}_{d} \text{, by Cauchy-Schwarz}\\
 & \to0\text{, as \ensuremath{n\to\infty}}.
\end{align*}

That is, partial derivatives of $u^{\b n}$ up to order $p$ converge
uniformly to the partial derivatives of $u$. This implies $\mathcal{F}u=f$
on $\bigcup_{n=1}^{\infty}S_{n}$. Then, since $h_{\text{max}}^{\b n}\to0$,
$\bigcup_{n=1}^{\infty}S_{n}$ is dense in $S$, and by continuity
of the partial derivatives of $u$ up to order $p$ (again implied
by $\norm{\vc{\omega}}_2^{p}d^{-\frac{1}{2}}\in\ell^{2}$), $\mathcal{F}u=f$
on $S$. A similar argument can be applied on the boundary to show
$\mathcal{G}u\eval_{\partial S}=g$. Therefore, $u$ solves (\ref{eq:fullprob})
and $u^{\b n}\to u$ in the $d$-norm (and uniformly in all derivatives
up to order $p$).
\end{proof}

 Note that we could impose additional conditions on additional domains and reach a similar result. 
 Also notice that if $\mathcal{F}=\mathcal{K}-\lambda$ and $g=0$, then $u$
is an eigenfunction of $\mathcal{K}$ with eigenvalue $\lambda$, subject to the given boundary condition.

A particular application of Proposition \ref{prop:Let--be2} is in
investigating solvability. Informally, Proposition \ref{prop:Let--be2}
shows:
\[
\text{``\ensuremath{d}-norm of \ensuremath{u^{\b n}} bounded"}\implies\text{``Solution to PDE exists in \ensuremath{\H\b d}"},
\]
and we already know:
\[
\text{``Solution to PDE exists in \ensuremath{\H\b d}"}\implies\text{``\ensuremath{d}-norm of \ensuremath{u^{\b n}} bounded"}.
\]
We can then conclude that there is an equivalence:
\[
\text{``\ensuremath{d}-norm of \ensuremath{u^{\b n}} bounded"}\iff\text{``Solution to PDE exists in \ensuremath{\H\b d}".}
\]
This means that for a dense enough point cloud, the $d$-norm of interpolant
solutions to solvable PDEs will be much smaller than the $d$-norm
for solutions for PDEs without a solution; if the PDE is not solvable, the norm must be unbounded. We test this numerically
in Subsection \ref{subsec-eigs} to search for
eigenvalues.

Proposition \ref{prop:Let--be2} shows convergence of $u^{\b n}$ to a solution of the PDE (\ref{eq:fullprob}) without a rate estimate.  To obtain a rate estimate, we need our PDE to have a unique solution and satisfy certain stability or regularity properties. In this case, Proposition \ref{prop:gamma} provides a straightforward way to prove high-order convergence of $u^{\b{n}}\to u$. We prove convergence here for operators satisfying a regularity condition; this condition is satisfied by elliptic operators under certain conditions.

\begin{proposition}\label{prop:conv}
Suppose $S$ is bounded and that for all $f\in C^q\b{\overline S},g\in C^q\b{\partial S}$ (if $S$ has boundary) there is a unique solution $u\in H^r(S)$ to (\ref{eq:fullprob}) for some $r>0$ that can be extended to $\H\b d$. Also, assume that a regularity condition holds for (\ref{eq:fullprob}) such that there exist constants $A,B>0$ and $\tilde p\ge0$ such that for each $v\in H^r\b S$
\begin{align}\label{eq:regular}
\norm{v}_{H^{r}\b S} \le A\norm{\F v}_{L^2\b S} + B\norm{\mathcal{G} v}_{H^{\tilde p}\b{\partial S}}
\end{align}
Then, let $u^{\b {\tilde{N}}}$ be the solution to (\ref{eq:finiteprob}) without the $u^{\b{\tilde N}}\b a = 1$ condition. Assume $S$ and $\partial S$ satisfy the assumptions for $S^{\b j}$ in Proposition (\ref{prop:gamma}). Then, for small enough $h_\text{max}$, there are constants $A_q,B_q>0$ such that 
\begin{align*}
\norm{u^{\b {\tilde{N}}} - u}_{H^r\b S} &\le A_{q}h_{\text{max}}^{q-\frac{m_S}{2}}\norm{ u}_{d}+{B}_{q}h_{\text{max}}^{q-\frac{m_S-1}{2}-\lceil{\tilde{p}}\rceil}\norm u_{d}.
\end{align*}
\end{proposition}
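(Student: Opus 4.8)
The plan is to apply the regularity condition (\ref{eq:regular}) to the difference $v=\b{u^{\b{\tilde N}}-u}\eval_S$ and then bound the two resulting terms using Proposition \ref{prop:gamma}. First note that (\ref{eq:finiteprob}) with the $u^{\b{\tilde N}}\b{\vc a}=1$ condition dropped is an instance of the discretization (\ref{eq:pde1}) of a PDE of the form (\ref{eq:fullpde}), with $S$ (carrying $\F,f$) and $\partial S$ (carrying $\mathcal{G},g$) playing the roles of two of the domains $S^{\b j}$. The exact solution $u$, extended to $\H\b d$, solves (\ref{eq:fullprob}) at every point of $S$ and of $\partial S$, so it satisfies the discrete constraints; hence $u$ is feasible, $\norm{u^{\b{\tilde N}}}_d\le\norm u_d$, and $u$ is precisely the exact solution that appears in Proposition \ref{prop:gamma}. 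Since $u\in H^r\b S$ by hypothesis and $u^{\b{\tilde N}}\eval_S$ is the restriction to bounded $S$ of an $\H\b d$ function (which is $C^p$ with uniformly convergent derivatives because $\norm{\vc\omega}_2^{p}d^{-\frac12}\in\ell^2$), $v\in H^r\b S$, so (\ref{eq:regular}) together with linearity ($\F v=\F u^{\b{\tilde N}}-f$ and $\mathcal{G}v=\mathcal{G}u^{\b{\tilde N}}-g$) gives
\[
\norm{u^{\b{\tilde N}}-u}_{H^r\b S}\le A\norm{\F u^{\b{\tilde N}}-f}_{L^2\b S}+B\norm{\mathcal{G}u^{\b{\tilde N}}-g}_{H^{\tilde p}\b{\partial S}}.
\]

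For the first term I would apply Proposition \ref{prop:gamma} on $S$ with $\abs\alpha=0$ (so $m_j=m_S$), maximizing over the finite atlas as in the remark following that proposition to get $\norm{\F u^{\b{\tilde N}}-f}_{L^\infty\b S}\le\tilde B_{q}h_{\text{max}}^{q-\frac{m_S}{2}}\norm u_d$, and then using boundedness of $S$ to pass $\norm{\cdot}_{L^2\b S}\le\abs S^{\frac12}\norm{\cdot}_{L^\infty\b S}$; this yields the $A_qh_{\text{max}}^{q-\frac{m_S}{2}}\norm u_d$ term. For the boundary term I would first use the continuous embedding $H^{\tilde p}\b{\partial S}\hookrightarrow H^{\lceil\tilde p\rceil}\b{\partial S}$, then a partition of unity subordinate to the atlas $\cb{\sigma_k}$ of $\partial S$: since the metric tensors and their inverses have bounded norm and the partition functions have bounded derivatives, $\norm{\mathcal{G}u^{\b{\tilde N}}-g}_{H^{\lceil\tilde p\rceil}\b{\partial S}}$ is controlled by a constant times $\sum_k\norm{\b{\mathcal{G}u^{\b{\tilde N}}-g}\circ\sigma_k}_{H^{\lceil\tilde p\rceil}\b{U_k}}$. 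Expanding each of these norms as a finite sum of $L^2\b{U_k}$ norms of derivatives $\partial^\alpha$ with $\abs\alpha\le\lceil\tilde p\rceil$, bounding each via $\abs{U_k}^{\frac12}$ times the $L^\infty\b{U_k}$ estimate $\tilde B_{k,q,\abs\alpha}h_{\text{max}}^{q-\frac{m_S-1}{2}-\abs\alpha}\norm u_d$ from Proposition \ref{prop:gamma} (with $m_j=m_S-1$ for $\partial S$), and noting that for small $h_{\text{max}}$ the slowest-decaying contribution is the one with $\abs\alpha=\lceil\tilde p\rceil$, I would obtain $\norm{\mathcal{G}u^{\b{\tilde N}}-g}_{H^{\tilde p}\b{\partial S}}\le\hat B_qh_{\text{max}}^{q-\frac{m_S-1}{2}-\lceil\tilde p\rceil}\norm u_d$. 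Multiplying by $B$ and adding the first term gives the claimed estimate with $B_q=B\hat B_q$.

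The hard part is the bookkeeping in this last step: passing from the intrinsic Sobolev norms on $S$ and $\partial S$ used in the regularity condition to the chart-wise $L^\infty$ bounds that Proposition \ref{prop:gamma} actually delivers. This leans on the finite-atlas, interior-cone, and bounded-(inverse-)metric hypotheses carried over from Proposition \ref{prop:gamma} (so that intrinsic and Euclidean norms are comparable chart by chart and fill distances transfer up to constants), on the boundedness of $S$ and $\partial S$ to go from $L^\infty$ to $L^2$, and on $q$ being large enough that Proposition \ref{prop:gamma} provides bounds on derivatives of order up to $\lceil\tilde p\rceil$ on $\partial S$; with these in hand the remaining manipulations are routine. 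When $\lceil\tilde p\rceil=0$ the boundary term collapses to the $L^2$ case and only the $\abs\alpha=0$ estimate is needed, and when $S$ is closed the second term is simply absent.
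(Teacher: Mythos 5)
Your proof is correct and follows exactly the route the paper takes: the paper's entire proof is the single sentence ``Assumption (\ref{eq:regular}) applied to $u^{\b{\tilde N}} - u$ along with Proposition \ref{prop:gamma},'' and your argument is precisely that, with the chart-wise bookkeeping (passing from the $L^\infty$ estimates of Proposition \ref{prop:gamma} to the $L^2\b S$ and $H^{\tilde p}\b{\partial S}$ norms, identifying the dominant $\abs{\alpha}=\lceil\tilde p\rceil$ contribution) filled in. The one caveat you flag --- that $q$ must be large enough for Proposition \ref{prop:gamma} to supply derivative bounds up to order $\lceil\tilde p\rceil$ on $\partial S$ --- is a genuine implicit hypothesis that the paper's one-line proof glosses over, so noting it is a point in your favour rather than a gap.
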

\begin{proof}
Assumption (\ref{eq:regular}) applied to $u^{\b{\tilde N}} - u$ along with Proposition \ref{prop:gamma}.
\end{proof}

There are a few comments to make regarding Proposition \ref{prop:conv}. First, note that since Proposition \ref{prop:gamma} gives global estimates, not just estimates on a finite set of points as in a finite difference consistency result, the regularity or stability of the PDE (\ref{eq:fullprob}) itself is sufficient for convergence of the numerical method. We also point out that some conditions for the domain are implicitly imposed by the assumption that the solution $u$ to (\ref{eq:fullprob}) can be extended to $\H\b d$. For Lipschitz flat domains $S$, functions on $H^k\b S$ can be extended to $H^k\b {\R^m}$ \cite[see][Thm. A.4]{mclea00}. For manifolds with boundary, this theorem must be applied on patches to suitably extend $u$ to a larger manifold using a partition of unity, and then Proposition \ref{prop-cpexist} can be applied. We can construct an $H^k$ periodic function on the box $\Omega$ by multiplying the extension to $\R^m$ by suitable $C^\infty$ bump functions \cite[see][Prop. 2.25]{lee13}. 
Finally, we comment that the regularity assumption (\ref{eq:regular}) holds for elliptic problems on flat domains with unique solutions for Neumann, Dirichlet, or Robin boundary conditions, with various values of $r, \tilde p$ depending on the degree of smoothness of $\partial S$ and the coefficient functions of $\F$ \cite[see][Thms. 4.10, 4.11 and Thm. 4 of 6.3.2, respectively]{mclea00,evans10}. For the regularity of the Laplace-Beltrami problem on a closed surface, see Thm. 3.3 of \cite{dziuk13}, and for boundary value problems involving the Laplace-Beltrami operator on manifolds, see Prop 1.2, Thm 1.3, Prop 1.7, Eq. (7.6), and Prop. 7.5 in Chapter 5 of \cite{taylo23}.

\section{Numerical Experiments}\label{sec:numexp}

\subsection{Surface Poisson}

We now move on to testing convergence for a surface PDE. Specifically,
we solve a Poisson problem on a catenoid with a “wavy” edge.
The surface is given by:
\begin{align}\label{eq:para}
S=\cb{\b{\cosh\b t\cos\b s,\cosh\b t\sin\b s,t}:s\in\left[0,2\pi\right), \b{t-0.1\sin\b{3s}}\in \sb{-1,1}}.
\end{align}

We solve the Poisson problem:
\begin{align*}
-\Delta_{S}u\b{s,t} & =16\frac{\cos\b{4s}}{\cosh^{2}\b t}=:f\b{s,t}
, \ u\b{s,t}\eval_{\partial S} =\cos\b{4s}=:g\b{s,t}.
\end{align*}

The exact solution to this problem is $u\b{s,t}=\cos\b{4s}$. To solve
this, we impose Hermite-Birkhoff interpolation conditions on a point cloud
$S_{\tilde{N}}$ and boundary point cloud $\partial S_{\tilde{N}_{\partial}}$ to attempt to find a first-order Closest Point-like extension
that approximately solves the PDE (see Corollary \ref{cor:Let--and}). Note that such an extension exists due to Proposition \ref{prop-cpexist}. Our discretized problem is:
\begin{align}
\text{minimize: } & \norm{\vc b}_{2}\label{eq:lbmin}\\
\text{subject to: } & \sum_{n=1}^{N_{b}}d_{n}^{-\frac{1}{2}}b_{n}\b{\Delta\phi_{n}-\n^{*}\b{D^{2}\phi_{n}}\n}=f\text{, on \ensuremath{S_{\tilde{N}}}}\nonumber\\
 & \sum_{n=1}^{N_{b}}d_{n}^{-\frac{1}{2}}b_{n}\n^{*}\nabla\phi_{n}=0\text{, on \ensuremath{S_{\tilde{N}}}}
 ,\ \sum_{n=1}^{N_{b}}d_{n}^{-\frac{1}{2}}b_{n}\phi_{n}=g\text{, on \ensuremath{\partial S_{\tilde{N}_{\partial}}}}.\nonumber
\end{align} 

Our choice of $d_{n}$ is
$\b{\exp\b{q\sqrt{2\pi/T}}+\exp\b{q\sqrt{\norm{\vc{\omega}_{n}}_{2}}}}^{2}$
with $q=4$; this choice achieves super-algebraic convergence when there is a solution in the native space. This is since the sequences $F^{\b{j,q}}d^{-\frac{1}{2}}$ from Proposition \ref{prop:gamma} will be in $\ell^2$ for all $q$ for this problem and choice of $d$.
$T$ can be used to control the oscillation width of
the $\psi$ functions from Subsection \ref{subsec:adjrange}. We use $T=2$, which will provide fast convergence
at the cost of poorer conditioning for dense point clouds, compared to smaller values of $T$. $T$ serves the same purpose as RBF shape parameters; however, conditioning here is not as much of an issue compared to most RBF methods, meaning a wider range of parameters are suitable. We discuss this further at the end of this subsection.
The other
parameters used are $\ell=4$ for a $4\times4\times4$ extension
domain $\Omega$ and $N_{b}=27^3$ Fourier basis functions.
This choice for $N_b$ ensures that the error is primarily determined by $h_\text{max}$ rather than the truncation of the Fourier series; $N_b\gg\tilde{N}$ for this test. 

\begin{figure}[ht]
\begin{centering}
\includegraphics[scale=0.7]{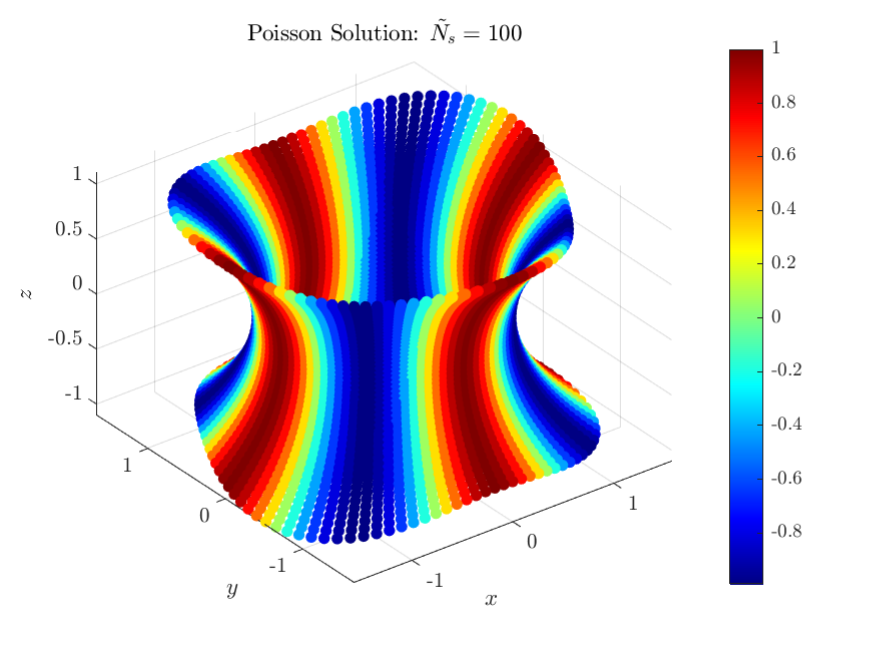}
\end{centering}
\caption{\label{fig:A-plot-of}A plot of the surface Poisson solution for $\tilde{N}_{s}=100$.}
\end{figure}

We place points on the surface by constructing a tensor grid of $\tilde{N}=\tilde{N}_s^2/2$ points on the plane of $\b{s,t}$ coordinates in $\left[0,2\pi\right)\times\sb{-1,1}$, where $\tilde{N}_{s}$ is the number of unique $s$-coordinate values so that $h_{\text{max}}\propto\tilde{N}_{s}^{-1}$. Then, we shift the values of $t$ up by $0.1\sin\b{3s}$ and map the points to $S$ with the parametrization given by (\ref{eq:para}). Importantly, the choice of points is not prescribed by the method, and we can choose our points freely as long as $h_\text{max}\to0$ as $\tilde{N}\to\infty$. For example, a randomly spaced point cloud is used in the next subsection. However, more regularly spaced point distributions tend to produce lower errors in practice and should be used when possible.  Figure
\ref{fig:A-plot-of} shows a plot of the computed solution for $\tilde{N}_{s}=100$ and Table \ref{tab:Recorded-max-error} presents a convergence test.

\begin{table}[ht]
\caption{\label{tab:Recorded-max-error}Recorded max error and estimated convergence rate on the point cloud
for the surface Poisson problem for various $\tilde{N}_{s}$ values.}
\begin{tabular*}{\columnwidth}{@{\extracolsep\fill}lll@{\extracolsep\fill}}
\toprule 
$\tilde{N}_{s}$ & Max Error & Convergence Rate\tabularnewline
\midrule

20 & $3.4619\times10^{-3}$ & N/A\tabularnewline

40 & $4.6741\times10^{-5}$ & 6.211\tabularnewline

60 & $1.7315\times10^{-6}$ & 8.128\tabularnewline

80 & $5.1884\times10^{-8}$ & 12.193\tabularnewline

100 & $3.1698\times10^{-9}$ & 12.527\tabularnewline
\bottomrule
\end{tabular*}
\end{table}

We see high-order convergence in Table \ref{tab:Recorded-max-error}. An important note is that such a low
error for $\tilde{N}_s=100$ is not possible without solving the optimization
problem directly; we use a complete orthogonal decomposition. Forming
the kernel matrix $\vc \Phi_{N_b}=\vc V_{N_{b}}\vc V_{N_{b}}^*$ results in a much more poorly conditioned linear system to solve, akin to direct RBF methods, causing convergence to stall. Note that the condition number of $\vc V_{N_{b}}\vc V_{N_{b}}^*$ is the square of the condition number of $\vc V_{N_{b}}$. The error for $\tilde{N}_s =100$ that is
obtained when solving the system with the kernel matrix naively is much larger at $4.7372\times 10^{-6}$.
Having additional, more stable options for finding a solution is a benefit
of underdetermined Fourier extensions over Hermite RBFs, while the
option to form the kernel matrix for the Fourier extensions also remains.

\subsection{Eigenvalue Problem\label{subsec-eigs}}

As mentioned in Subsection \ref{subsec:A-Note-on}, we may be able to use
the fact that the $d$-norm of interpolant solutions is bounded if
and only if a solution exists to investigate solvability numerically.
As an example, we can consider the eigenvalue problem on the unit
sphere $S=\mathbb{S}^{2}$ for the (negative) Laplace-Beltrami operator:
\[
-\Delta_{S}u-\lambda u=0\text{, for some $u\ne0$}.
\]

The eigenvalues $\lambda$ are $n\b{n+1}$ for non-negative integers $n$. To find
the eigenvalues numerically, we can solve the problem:
\begin{align}\label{eq:interpeig}
\b{-\Delta\tilde{u}-\n_{S}\cdot\b{D^{2}\tilde{u}}\n_{S}-\lambda \tilde{u}}\eval_{S_{\tilde{N}}} =0, \quad \n_{S}\cdot\nabla\tilde{u}\eval_{S_{\tilde{N}}} =0, \quad \tilde{u}\b{0,0,1} =1.
\end{align}
where $S_{\tilde N}$ is a set of $\tilde N$ randomly placed points on the sphere, and we choose the interpolant $\tilde u=\sum_{n=1}^{N_{b}}d_{n}^{-\frac{1}{2}}\tilde b_{n}\phi_{n}$ such that $\norm{\tilde {\vc b}}_2$ is minimized subject to the constraints in (\ref{eq:interpeig}).

The final condition ensures that we produce a non-zero solution. At first glance, the location of the single point $\vc a$
where we enforce $\tilde{u}\b{\vc a}=1$ may seem to be important, since
the eigenspace for a single eigenvalue may have $\tilde{u}\b{\vc a}=0$
for our selected $\vc a$. However, a random point would work with probability one, since the zero set of any eigenfunction is measure zero. That is, for arbitrary surfaces, choosing a point randomly should not cause an issue in practice. In this sphere example, symmetry ensures that there is an eigenfunction with $\tilde{u}\b{0,0,1}=1$
for any eigenvalue of $-\Delta_{S}$, and any point location would work. Another condition that forces the function to be non-zero would also be acceptable, such as setting a sum of function values or derivatives at random points to be equal to 1, but in our testing, setting $\tilde{u}\b{\vc a}=1$ for a single point $\vc a$ is sufficient. Eigenvalue multiplicities can be explored by setting multiple
points where the function must be non-zero, but we leave this for later work.

The $d$-norm of the interpolant
for various values of $\lambda$ is shown in Figure \ref{fig:Relative-error-for-1},
along with red lines at the true eigenvalues.

\begin{figure}[ht]
\begin{centering}
\includegraphics[scale=0.7]{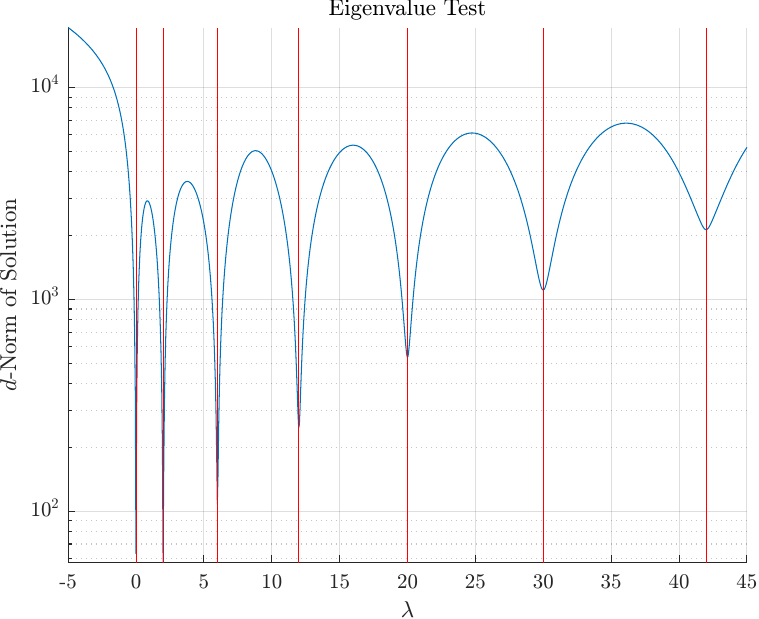}
\par\end{centering}
\caption{\label{fig:Relative-error-for-1}$d$-norm of interpolant solution
for various $\lambda$ ($\tilde{N}=400$ randomly generated points
on the sphere, $25^{3}$ Fourier basis functions on $\Omega=\protect\b{-2,2}^{3}$,
$d_{n}=\b{\exp\b{4\sqrt{2\pi/T}} + \exp\protect\b{4\sqrt{\protect\norm{\protect\vc{\omega}_{n}}_2}}}^2$, $T=4$). True eigenvalues are indicated
by red lines.}
\end{figure}

We see that the minima of the $d$-norm align with the correct eigenvalues in Figure \ref{fig:Relative-error-for-1}.
We also look specifically at the convergence of the first non-zero
eigenvalue ($\lambda=2$) in Table \ref{tab:Error-and-convergence-1};
we use a simple bisection-like search to find the local minimum. $h_\text{avg}$ is the average
distance between points on the surface and their closest points in the point cloud and is inversely proportional to $\sqrt{\tilde{N}}$.

\begin{table}[H]
\caption{\label{tab:Error-and-convergence-1}Error in the estimate of the first
non-zero eigenvalue for the sphere (randomly generated points on the
sphere, $25^{3}$ Fourier basis functions on $\Omega=\protect\b{-2,2}^{3}$,
$d_{n}=\b{\exp\b{4\sqrt{2\pi/T}} + \exp\protect\b{4\sqrt{\protect\norm{\protect\vc{\omega}_{n}}_2}}}^2$, $T=4$).}
\begin{tabular*}{\columnwidth}{@{\extracolsep\fill}llll@{\extracolsep\fill}}
\toprule 
$\tilde{N}$ & $\abs{\lambda_{\text{est}}-2}$ & Convergence Rate ($\tilde{N}$) & Convergence Rate ($h_{\text{avg}}$)\tabularnewline

\midrule 
100 & $1.7868\times10^{-2}$ & N/A & N/A\tabularnewline

150 & $3.3266\times10^{-3}$ & 4.146 & 8.292\tabularnewline

200 & $1.1514\times10^{-4}$ & 11.692 & 23.384\tabularnewline

250 & $9.2243\times10^{-6}$ & 11.313 & 22.625\tabularnewline

300 & $3.2410\times10^{-7}$ & 18.366 & 36.732\tabularnewline
\bottomrule
\end{tabular*}
\end{table}

Table \ref{tab:Error-and-convergence-1} again shows high-order convergence. Overall, this method of finding eigenvalues seems to be more reliable
and theoretically justified than attempting to produce a differentiation
matrix from interpolation followed by differentiation of the basis
functions, which can produce incorrect results, even for small $\lambda$
for a variety of collocation methods (see the discussion at the end of Subsection 2.2 from \cite{yan23}). Its use is also not limited to eigenvalues; any solvability problem relying on
parameters can be investigated similarly.

\section{Conclusions} \label{sec:conclusions}

We successfully developed, analyzed, and tested methods for Hermite-Birkhoff interpolation and PDEs on surfaces. The methods rely on underdetermined Fourier extensions and work on unstructured point clouds with very few conditions for convergence. 
We have shown that our method produces solutions that approximately solve PDEs of interest in an extremely general setup (Proposition \ref{prop:gamma}); convergence rates could then be estimated for a wide range of PDEs, including those that satisfy a regularity condition (Proposition \ref{prop:conv}). Convergence is also shown in a general setting without rates by Proposition \ref{prop:Let--be2}.

Of particular interest, our method works for surface PDEs on unstructured point clouds and is able to achieve arbitrarily high rates of convergence. For bulk problems, our method could be seen as an alteration of earlier, symmetric Hermite RBF methods \citep[see, for example,][]{sun94,frank98,liu23}. Hermite RBFs do not seem to be commonly applied to surface PDEs outside of one approach for RBF finite differences from \cite{shaw19}. The setup of traditional Hermite RBF approaches, however, can be quite difficult, which may help explain why such methods are not widespread compared to non-symmetric RBF methods. We also find that it is typically easier to work with Fourier series directly to set up the correct linear system rather than with the often unwieldy functions that arise with Hermite RBFs. 

Furthermore, our method allows for more stable methods of solution; solving the optimization problem by complete orthogonal decomposition or singular value decomposition tends to produce superior results compared to actually forming $\vc{\Phi}_{N_b}$ when the system is ill-conditioned. Again, this is related to the fact that the condition number of $\vc \Phi_{N_b} = \vc V_{N_{b}}\vc V_{N_{b}}^*$ is the square of the condition number of $\vc V_{N_b}$; forming $\vc \Phi_{N_b}$ is quite similar to using Hermite RBFs, but with positive definite functions on a box rather than all of $\R^m$.

Our future work seeks to take advantage of the flexibility of the method for problems that would be intractable using more traditional approaches. Spacetime methods have shown promise in numerical testing, as have methods for producing conformal parametrizations of surfaces defined solely by point clouds. Particularly, such as in the case of conformal mapping, the method allows for an accurate solution to be found to problems with many possible solutions. A similar approach to Subsection \ref{subsec-eigs} may also allow a range of solvability problems to be investigated numerically in a rigorously justified manner. Overall, we expect that our method could be useful for a large number of problems since its setup is nearly universal for linear PDEs (on surfaces), and since it is meshfree and high-order.

\section*{Funding}
We acknowledge the support of the Natural Sciences and
Engineering Research Council of Canada (NSERC), [funding reference numbers 579365-2023 (DV), RGPIN-2022-03302 (SR)].

\end{document}